\newtheorem{theorem}{Theorem}[section]
\newtheorem{proposition}[theorem]{Proposition}
\newtheorem{lemma}[theorem]{Lemma}
\newtheorem{corollary}[theorem]{Corollary}
\theoremstyle{definition}
\newtheorem{definition}[theorem]{Definition}
\newtheorem{example}[theorem]{Example}
\newtheorem{remark}[theorem]{Remark}
\newcommand{\ua}{\mathord{\uparrow}}
\newcommand{\da}{\mathord{\downarrow}}
\newcommand{\cl}{{\rm cl}}
\newcommand{\I}{\item}
\newcommand{\II}{\begin{enumerate}}
\newcommand{\III}{\end{enumerate}}
\begin{document}

\begin{frontmatter}

\title{The reflectivity  of  some categories of  $T_0$ spaces  in domain theory}
\tnotetext[mytitlenote]{This work was supported by NSFC (1210010153, 12071188) and NIE ACRF (RI 3/16 ZDS), Singapore} 

\author{Chong Shen}
\address{School of Mathematical Sciences, Nanjing Normal University, Nanjing, Jiangsu,  China}
\corref{mycorrespondingauthor}
\cortext[mycorrespondingauthor]{Corresponding author}
\ead{shenchong0520@163.com}

\author{Xiaoyong Xi}
\address{School of Mathematics and Statistics, Jiangsu Normal University, Jiangsu, Xuzhou, China}

\author{Dongsheng Zhao}
\address{Mathematics and Mathematical Education, National Institute of Education,
	Nanyang Technological University,  1 Nanyang Walk, Singapore}

\begin{abstract}
Keimel and Lawson  proposed a set of  conditions for proving a category of  topological spaces to be reflective in the category of all $T_0$ spaces. These conditions were  recently  used to prove the  reflectivity of the category of all well-filtered spaces.  In this paper, we prove that, in certain sense,  these conditions are not just sufficient but also necessary for a category of $T_0$ spaces to be reflective. Using this general result, we easily deduce that several categories proposed  in domain theory are not reflective, thus answered  a few open problems.
 \end{abstract}

\begin{keyword} $b$-topology, co-sober,  $k$-bounded sober, open well-filtered space, sober,  strong $d$-space

\MSC[2010] 06B35 \sep 06B30 \sep 54A05
\end{keyword}

\end{frontmatter}


Given a full subcategory $\mathbf{D}$ of a category $\mathbf{C}$, one natural and frequently asked question is whether $\mathbf{D}$ is reflective in $\mathbf{C}$. The objects in $\mathbf{D}$ can be viewed as  ``special objects", the reflectivity of $\mathbf{D}$ ensures that every general object in $\mathbf{C}$ can be ``completed" to be a special object, or ``densely embedded into"  a special object.

 Keimel and Lawson \cite{KeimelLawson} showed that  a category ${\bf K}$ of  $T_0$ spaces is reflective in the category  ${\bf Top_0}$ of all $T_0$ spaces  if it satisfies the following {\bf four conditions}:
\II
\I[(K1)] ${\bf K}$ contains all sober spaces;
\I[(K2)] If $X\in{\bf K}$ and $Y$ is homeomorphic to $X$, then $Y\in{\bf K}$;
\I[(K3)] If $\{X_i:i\in I\}\subseteq{\bf K}$ is a family of subspaces of a sober space, then the subspace $\bigcap_{i\in I}X_i\in{\bf K}$.
\I[(K4)]  If $f:X\longrightarrow Y$ is a continuous mapping from a sober space $X$ to a sober space $Y$, then for any subspace $Y_1$ of $Y$, $Y_1\in{\bf  K}$ implies that $f^{-1}(Y_1)\in {\bf K}$.
\III

It is now known  that the category  of $d$-spaces, the category of well-filtered spaces and the category of sober spaces  all satisfy the above four conditions (see \cite{KeimelLawson, wuxixuzhao,Wyler}), hence they are reflective.

These four conditions can, however,  usually only be used to confirm the reflectivity of subcategories of ${\bf Top_0}$. In this paper we shall prove that, in certain sense,  they are also necessary  conditions, hence can be used to disprove the reflectivity of some subcategories of   ${\bf Top_0}$. In particular, we shall apply the main result to show that the categories of all co-sober spaces, the category of all open-well-filtered spaces,  the category of all strongly $k$-bounded sober spaces and the category of  all
strongly $d$-spaces are not reflective, thus give the answers to  a group of open problems.

For a subcategory ${\bf K}$ of ${\bf Top_0}$, we say that {\bf K} 
\II
\I[(1)] is \emph{productive}, if the product $\prod_{i\in I}X_i\in{\bf K}$ whenever $\{X_i:i\in I\}\subseteq{\bf K}$, and
\I[(2)] is \emph{$b$-closed-hereditary}, if $A\in{\bf K}$ whenever $A$ is a $b$-closed subspace of some $X\in{\bf K}$. 
\III

The following is our main result.

\begin{theorem}
For a full subcategory ${\bf K}$ of ${\bf Top_0}$ with ${\bf K}\nsubseteq {\bf Top_1}$, if ${\bf K}$ satisfies (K2), then the following four statements are equivalent:
\II	
\I[$(1)$]	${\bf K}$ is reflective in ${\bf Top_0}$;	
\I[$(2)$]	 ${\bf K}$ satisfies conditions (K1)--(K4);	
\I[$(3)$]	 ${\bf K}$ is productive and $b$-closed-hereditary;
\I[$(4)$]	${\bf K}$ is productive and has equalizers.
\III
\end{theorem}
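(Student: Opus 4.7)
The plan is to establish the cyclic chain $(2)\Rightarrow(1)\Rightarrow(4)\Rightarrow(3)\Rightarrow(2)$. The first implication $(2)\Rightarrow(1)$ is exactly the Keimel--Lawson theorem recalled in the introduction. For $(1)\Rightarrow(4)$, I would invoke the standard categorical fact that a full reflective subcategory of a complete category is closed under all limits; since products and binary equalizers of $T_0$ spaces (computed as in $\mathbf{Top}$) are again $T_0$, a reflective $\mathbf{K}\subseteq\mathbf{Top_0}$ is automatically productive and closed under equalizers.

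For $(4)\Rightarrow(3)$, the point is to realize every $b$-closed subspace as an equalizer in $\mathbf{Top_0}$. Given $X\in\mathbf{K}$ and a $b$-closed $A\subseteq X$, I would decompose $A$ as an intersection of basic $b$-closed sets drawn from the generating family of the $b$-topology (each being a finite combination of original closed sets and principal down-sets $\da x$), and then translate the defining condition ``$x\in A$'' into the coincidence of two continuous maps valued in a suitable power of either $X$ or of the Sierpinski space. Productivity places the codomain into $\mathbf{K}$, after which hypothesis (4) gives $A\in\mathbf{K}$, yielding $b$-closed-heredity.

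The substantive content lies in $(3)\Rightarrow(2)$. Because $\mathbf{K}\nsubseteq\mathbf{Top_1}$, some $X\in\mathbf{K}$ carries a strict specialization $x_0<x_1$; the two-point subspace $\{x_0,x_1\}$ is $b$-closed in $X$ (singletons are closed in the Hausdorff $b$-topology of any $T_0$ space) and is homeomorphic to the Sierpinski space $\mathbb{S}$, so $b$-closed-heredity together with (K2) places $\mathbb{S}$ in $\mathbf{K}$. Productivity then gives $\mathbb{S}^I\in\mathbf{K}$ for every set $I$. Since every sober space $Y$ embeds as a $b$-closed subspace of $\mathbb{S}^{\mathcal{O}(Y)}$ via $y\mapsto(\chi_U(y))_{U\in\mathcal{O}(Y)}$, with the defining frame-homomorphism conditions on the image being of the form ``(original open) $\cap$ (original closed)'', we obtain (K1). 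For (K3), the intersection $\bigcap_i X_i$ of subspaces of a sober $Y$ is homeomorphic to the diagonal of $\prod_i X_i\in\mathbf{K}$, and this diagonal is $b$-closed because it is the intersection, over coordinate pairs, of preimages of the $b$-closed diagonal $\Delta_Y\subseteq Y\times Y$ (itself $b$-closed by the $T_0$ property). For (K4), using (K1) we have $X\in\mathbf{K}$ and hence $X\times Y_1\in\mathbf{K}$; the preimage $f^{-1}(Y_1)$ is homeomorphic to $\{(x,y)\in X\times Y_1 : f(x)=y\}$, which is $b$-closed in $X\times Y_1$ since the graph of any continuous map into a $T_0$ space is $b$-closed in the product.

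The main obstacle will be the repeated verification of $b$-closedness for subspaces that are typically not closed in the original topology---diagonals, graphs, and the sobrification image inside a Sierpinski power---each of which relies on being able to separate non-coincident pairs by a single ``open $\cap$ closed'' subbase element of the $b$-topology. A parallel technical point is the realization of a general $b$-closed subspace as an equalizer in $(4)\Rightarrow(3)$, where one must assemble the correct target in $\mathbf{K}$ and a correct pair of maps systematically from the subbase decomposition of $A$.
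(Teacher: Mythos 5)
Your cycle $(2)\Rightarrow(1)\Rightarrow(4)\Rightarrow(3)\Rightarrow(2)$ is structurally sound and several legs genuinely differ from the paper's route, mostly to good effect: you get $(1)\Rightarrow(4)$ from the abstract fact that an isomorphism-closed full reflective subcategory is closed under limits of the ambient category, whereas the paper derives $b$-closed-heredity from the nontrivial theorem that every $\mathbf{K}$-reflection is a $b$-dense embedding, followed by a retract argument; and your proofs of (K3) and (K4) via the diagonal of $\prod_i X_i$ (pulled back from the $b$-closed $\Delta_Y\subseteq Y\times Y$) and via the graph of $f$ inside $X\times Y_1$ are cleaner than the paper's arguments, which run each case through the $\mathbf{K}$-reflection, a uniqueness-of-factorization computation, and the fact that a $b$-dense retract is a homeomorphism. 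All the $b$-closedness claims you rely on (singletons, diagonals, graphs, the sobrification image in a Sierpi\'nski power) are correct, the last one being exactly the paper's Theorem~\ref{th11}(1) combined with Remark~\ref{rem90}.

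There is, however, one genuine gap, in $(4)\Rightarrow(3)$. To conclude $A\in\mathbf{K}$ from ``$\mathbf{K}$ has equalizers'' you must exhibit $A$ as the equalizer of two morphisms \emph{of} $\mathbf{K}$, i.e.\ with codomain already known to lie in $\mathbf{K}$. Neither of your two suggested codomains works at that point of the cycle: $(\Sigma 2)^M$ is not yet known to be in $\mathbf{K}$ (you only derive $\Sigma 2\in\mathbf{K}$ later, in $(3)\Rightarrow(2)$, \emph{from} $b$-closed-heredity, so taking it here is circular), while a power $X^M$ of the ambient space is useless when $X$ happens to be $T_1$, since the two characteristic-type maps you need require a strictly comparable pair in the codomain. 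The repair is to route everything through the non-$T_1$ witness: pick $Z\in\mathbf{K}$ with $z_1<z_2$, write a basic $b$-closed set $U\cup(X\setminus V)$ as the coincidence set of the continuous maps sending $x$ to $z_2$ or $z_1$ according to membership in $U$, resp.\ $U\cup V$, and assemble these over a base decomposition of $A$ into a pair $X\longrightarrow Z^M$ with $Z^M\in\mathbf{K}$ by productivity. (Applying the same device first to $\{z_1,z_2\}\subseteq Z$ yields $\Sigma 2\in\mathbf{K}$, after which the paper's Lemma~\ref{th007} gives the general case directly.) With that substitution the proposal goes through.
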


\section{The $b$-topology and $b$-dense embedding}

Let $P$ a poset. For $A\subseteq P$, let $\da A=\{x\in P: x\leq a\text{ for some }a\in A\}$ and $\ua A=\{x\in P: x\geq a\text{ for some }a\in A\}$. For $x\in P$, we write $\da x$ for $\da\{x\}$ and $\ua x$ for $\ua \{x\}$, respectively.
A subset $A$ is called a \emph{lower set} (resp. \emph{upper set}) if $A=\da A$ (resp. $A=\ua A$).

For a $T_0$ space $X$,
the specialization order $\leq$ on $X$ is defined as $x\leq y$ iff $x\in \cl(y)$, where $\cl$ is
the  closure operator of $X$. In the following, when we consider a $T_0$ space $X$ as a poset, it is always equipped with  the specialization order.

For a $T_0$ space $X$, we use  $\mathcal O(X)$ to denote the topology of $X$.  For any subset $A$ of  $X$, the  \emph{saturation} of $A$, denoted by $Sat(A)$, is defined to be
$$Sat(A) =\bigcap\{U\subseteq \mathcal O(X): A\subseteq U\}.$$
A subset $A$ of a $T_0$ space $X$ is  \emph{saturated} if $A=Sat(A)$.

\begin{remark}(\cite{redbook,goubault}) Let $X$ be a $T_0$ space.
\II
\I[(1)]	  For any subset $A$ of $X$,  $Sat(A)=\ua A$.
\I[(2)]  For any $x\in X$, $\da x=\cl(x)$, and
	$x\in Sat(A)$ if and only if $ \ \da x\cap A\neq\emptyset.$
\I[(3)] For any open subset $U$ of $X$, $U=Sat(U)$, and for any closed subset $F$ of $X$, $F=\da F$.
\III
\end{remark}

	A nonempty subset $A$ of a $T_0$ space is called \emph{irreducible} if for any closed sets $F_1$, $F_2$, $A \subseteq F_1\cup F_2$ implies $A \subseteq F_1$ or $A \subseteq  F_2$.
	
	A $T_0$ space $X$ is called \emph{sober}, if for any  irreducible closed set $F$ of $X$ there is a (unique) point $x\in X$ such that $F=\da x$.

A very  effective tool for  studying  sober spaces is the $b$-topology introduced by L. Skula \cite{skula} (see also \cite{dowker}).

\begin{definition}(\cite{dowker,skula})
Let $X$ be a $T_0$ space. The \emph{$b$-topology} associated with $X$ is the topology which has the family  
$$\{U\cap\cl(x): x\in U\in\mathcal O(X)\}$$  as a base. The  resulting space is denoted by $bX$.   A subset $B$ of $X$  is \emph{$b$-dense} in $X$, if it is dense in $X$ with respect to the $b$-topology.
\end{definition}

In what follows, we shall denote by ${\bf Top_0}$ (resp. ${\bf Top_1}$, {\bf Sob}) the category of all $T_0$ spaces (resp. $T_1$ spaces, sober spaces) with  continuous mappings as morphisms.  All subcategories   of ${\bf Top_0}$  are assumed to be full and  closed under the formation of homeomorphic objects (i.e., satisfy (K2)). Note that condition (K2) is generally easy to be verified.

\begin{definition}
A  subcategory ${\bf K}$  of ${\bf Top_0}$ is \emph{reflective}, if for each $X\in {\bf Top_0}$, there exists $X^k\in{\bf K}$ (the \emph{${\bf K}$-completion} for $X$) and a  continuous mapping $\mu_X:X\longrightarrow X^k$ (the \emph{${\bf K}$-reflection} for $X$) satisfying the universal property: for any
continuous mapping $f:X\longrightarrow Z$ to a space $Z\in{\bf K}$, there exists a unique continuous mapping $g:X^k\longrightarrow Z$ such that $g\circ \mu_X=f$:
\begin{equation*}
\xymatrix@C=5em@R=5ex{
	X \ar[dr]_-{f} \ar[r]^-{\mu_X}
	&X^k\ar@{-->}[d]^-{g}\\
	&Z}
\end{equation*}			
\end{definition}	
	
Equivalently,   ${\bf K}$ is reflective if the inclusion functor $F: {\bf K}\rightarrow {\bf Top_0}$ has a right adjoint (see IV-3 in \cite{maclane}). 

\begin{definition}	
A  mapping  $e:X\longrightarrow Y$  between topological spaces is called  a \emph{$b$-dense embedding}, if it is a topological  embedding such that $e(X)$ is $b$-dense in $Y$.
\end{definition}

\begin{remark}
	The category ${\bf Sob}$ is a full reflective subcategory of ${\bf Top_0}$, and  each reflection $\mu_X$ is a $b$-dense embedding (see \cite[3.1.2]{hof}, or \cite[Proposition 3.2]{KeimelLawson}). The ${\bf Sob}$-completion for $X$ is usually called the \emph{sobrification} of $X$.
\end{remark}

\begin{remark}\label{rem2}
By the definition of  ${\bf K}$-completion, we easily deduce the following:
\II	
\I[(1)]	if both $\mu_1:X\longrightarrow Y_1$ and $\mu_2:X\longrightarrow Y_2$ are ${\bf K}$-reflections for $X$, then there exists a (unique) homeomorphism $h:Y_1\longrightarrow Y_2$ such that $h\circ \mu_1=\mu_2$;
\I[(2)]	if $\eta: X\longrightarrow X^k$ is a ${\bf K}$-reflection for $X$, and $h:X^k\longrightarrow Y$ is a homeomorphism to some $T_0$ space $Y$, then
		$h\circ\eta:X\longrightarrow Y$ is a ${\bf K}$-reflection for $Y$.
\III
\end{remark}





\begin{theorem}[\cite{KeimelLawson}]\label{th11}
Let $X$ be a sober space and $A\subseteq X$. 
\II	
\I[$(1)$] The subspace $A$  is  sober  if and only if $A$ is $b$-closed;
\I[$(2)$] the inclusion mapping $e:A\longrightarrow A^s$, $x\mapsto x$, is a sober reflection for $A$, where $A^s$ is the $b$-closure of $A$ in $X$.
\III
\end{theorem}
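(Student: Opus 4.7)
The plan is to prove (1) directly, then bootstrap to (2). Throughout I would lean on the following observation, which I establish en route to (1): \emph{for every point $p$ in the $b$-closure of $A$ in $X$, the set $\da p\cap A$ is an irreducible closed subset of the subspace $A$, and $\cl_X(\da p\cap A)=\da p$.}

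For the direction ``$A$ sober $\Rightarrow$ $A$ is $b$-closed in $X$'' of (1), I would take $x\in X\setminus A$ and derive a contradiction from the assumption that $x$ lies in the $b$-closure of $A$. The set $F:=\da x\cap A$ is nonempty, since the basic $b$-open neighborhood $X\cap \cl(x)=\da x$ of $x$ must meet $A$, and is closed in $A$. The main technical step is irreducibility of $F$: given $F\subseteq (C_1\cup C_2)\cap A$ with $C_1,C_2$ closed in $X$, if neither $C_i$ covered $F$, picking $a_i\in F\setminus C_i$ and using that open sets are upper sets in the specialization order would place $x$ in the open set $V:=(X\setminus C_1)\cap (X\setminus C_2)$; then the $b$-open neighborhood $V\cap \cl(x)$ of $x$ would meet $A$, yielding a point of $F$ outside $C_1\cup C_2$, a contradiction. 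Sobriety of $A$ now produces $y\in A$ with $F=\cl_A(y)=\da y\cap A$. Every open neighborhood $U\ni x$ of $X$ meets $F$ (apply $b$-density to $U\cap \cl(x)$), so $x\in \cl_X(F)\subseteq\da x$, i.e., $\cl_X(F)=\da x$; since $y\in F\subseteq\da y$ similarly gives $\cl_X(F)=\da y$, we obtain $\da x=\da y$, whence $x=y\in A$ by $T_0$-ness.

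The reverse direction of (1) is more direct. Given $F$ irreducible closed in $A$ with $A$ $b$-closed, I would first observe that $\cl_X(F)$ remains irreducible in $X$ (an $X$-closed cover of $\cl_X(F)$ intersects with $A$ to give an $A$-closed cover of $F$), then invoke sobriety of $X$ to write $\cl_X(F)=\da x$ for a unique $x$. To place $x$ in $A$, assume $x\notin A$; then $b$-closedness of $A$ supplies a basic $b$-open $U\cap \cl(y)$ with $x\in U\cap \cl(y)$ and $(U\cap \cl(y))\cap A=\emptyset$. However, $U$ meets $F$ since $x\in \cl_X(F)$, yielding $a\in F\cap U\subseteq A$ with $a\leq x\leq y$, whence $a\in U\cap\cl(y)\cap A$, a contradiction. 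Then $F=\cl_X(F)\cap A=\cl_A(x)$, with uniqueness of $x$ inside $A$ by $T_0$-ness.

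For (2), note first that $A^s$ is $b$-closed in $X$ (being a closure), hence sober by (1), and $e$ is a $b$-dense embedding by construction. Given continuous $f:A\to Z$ with $Z$ sober, I would define $g:A^s\to Z$ by sending $p\in A^s$ to the unique sober point of $\cl_Z(f(\da p\cap A))$. This is well-defined because $\da p\cap A$ is irreducible closed in $A$ by the key observation, and continuous images (and their closures) of irreducible sets are irreducible. That $g$ extends $f$ is immediate from $\cl_A(a)=\da a\cap A$ for $a\in A$. Continuity of $g$ reduces to the identity $g^{-1}(V)=U\cap A^s$ whenever $V\subseteq Z$ is open and $f^{-1}(V)=U\cap A$ for some open $U\subseteq X$; one inclusion uses the upper-set property of $U$, and the other uses $b$-density of $A$ at $p\in A^s$ inside $X$ to extract an $a\in U\cap A\cap \da p$. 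The step I expect to be the main obstacle is \emph{uniqueness} of $g$: if $g'$ is another continuous extension of $f$, I would show $\cl_Z(g'(p))=\cl_Z(f(\da p\cap A))$ for every $p\in A^s$, so that $g'(p)=g(p)$ by sobriety of $Z$. The inclusion $\supseteq$ follows from $\da p\cap A\subseteq \cl_{A^s}(p)$ together with continuity of $g'$; the inclusion $\subseteq$ is again the $b$-density argument, unpacking an open $V\ni g'(p)$ via $g'^{-1}(V)=W\cap A^s$ with $W$ open in $X$ and finding $a\in W\cap A\cap \da p$ with $f(a)\in V$.
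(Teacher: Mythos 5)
Your argument is correct. Note that the paper itself offers no proof of this statement---it is imported verbatim from Keimel and Lawson---so there is nothing internal to compare against; judged on its own, your proof is complete and follows the standard line. The forward direction of (1) (sober $\Rightarrow$ $b$-closed) via the irreducible closed set $\da x\cap A$ and the identification of its generic point with $x$, and the reverse direction via transporting an irreducible closed set of $A$ to $X$ and pulling its generic point back into $A$ using a basic $b$-open separator, are exactly the expected arguments, and I verified each step (in particular the use of the upper-set property of open sets to place $x$ in $(X\setminus C_1)\cap(X\setminus C_2)$, and the identity $F=\cl_X(F)\cap A$ for $F$ closed in $A$). For (2), you could have saved most of the work by observing that $e$ is a $b$-dense embedding into the sober space $A^s$ and invoking Theorem \ref{th12}, which the paper cites from the same source and which characterizes sober reflections as exactly the $b$-dense embeddings into sober spaces; your choice to instead construct the extension $g(p)=$ generic point of $\cl_Z(f(\da p\cap A))$ explicitly makes the proof self-contained (and in effect reproves the relevant half of Theorem \ref{th12} for inclusion maps). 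The well-definedness, the extension property, the two inclusions for continuity, and the uniqueness argument all check out; the only point worth flagging is that your key observation ($\da p\cap A$ irreducible closed in $A$ with $\cl_X(\da p\cap A)=\da p$ for $p\in A^s$) must be, and indeed is, established without assuming $A$ sober, since part (2) applies it to an arbitrary $A$.
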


\begin{theorem}[\cite{KeimelLawson}]\label{th12}
Let $X$ be a $T_0$ space,  $Y$  a sober space and $f:X\longrightarrow Y$ a continuous mapping. Then $f$ is a sober reflection for $X$ if and only if $f$ is a $b$-dense embedding.
\end{theorem}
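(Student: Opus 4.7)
The plan is to deduce both directions from Theorem \ref{th11}, together with the fact recorded in the Remark preceding it, that the sobrification $\mu_X : X \longrightarrow X^s$ is always a $b$-dense embedding, while invoking the uniqueness-up-to-homeomorphism of reflections from Remark \ref{rem2}(1).

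For the direction ``$b$-dense embedding $\Longrightarrow$ sober reflection'', I would factor $f: X \longrightarrow Y$ as $f = e \circ h$, where $h: X \longrightarrow f(X)$ is the corestriction and $e: f(X) \hookrightarrow Y$ is the inclusion. Since $f$ is an embedding, $h$ is a homeomorphism; since $f(X)$ is $b$-dense in the sober space $Y$, the $b$-closure of $f(X)$ in $Y$ equals $Y$, so Theorem \ref{th11}(2) yields that $e$ is a sober reflection for $f(X)$. The universal property then transfers across $h$: given continuous $g: X \longrightarrow Z$ with $Z$ sober, apply the universal property of $e$ to $g \circ h^{-1} : f(X) \longrightarrow Z$ to obtain a unique $\tilde g : Y \longrightarrow Z$ with $\tilde g \circ e = g \circ h^{-1}$; then $\tilde g \circ f = g$, and uniqueness of $\tilde g$ transfers because $h$ is surjective.

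For the converse, suppose $f: X \longrightarrow Y$ is a sober reflection. Since the sobrification $\mu_X: X \longrightarrow X^s$ is also a sober reflection, Remark \ref{rem2}(1) produces a homeomorphism $h: X^s \longrightarrow Y$ with $h \circ \mu_X = f$. I would then observe that any homeomorphism between $T_0$ spaces in their original topologies automatically lifts to a homeomorphism between the associated $b$-topologies, because the basic $b$-opens $U \cap \cl(x)$ are defined purely in terms of open sets and point closures, and both are preserved by any homeomorphism. Consequently $h$ maps the $b$-dense subset $\mu_X(X)$ of $X^s$ onto $f(X)$, which is therefore $b$-dense in $Y$. Finally $f = h \circ \mu_X$ is a topological embedding as the composite of an embedding with a homeomorphism.

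The only delicate point is bookkeeping: verifying that homeomorphisms in the original topology induce homeomorphisms of the $b$-topologies, and that the universal property propagates cleanly across the homeomorphism $h$ in the first direction. Neither step is a genuine obstacle, so the merit of the proof lies in how cleanly it reuses Theorem \ref{th11} and the functoriality of the $b$-topology construction.
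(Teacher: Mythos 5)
Your proposal is correct, but note that the paper itself offers no proof of this statement --- it is quoted from Keimel and Lawson --- so there is nothing internal to compare against; what you have written is a legitimate self-contained derivation from the other results the paper does record. Both directions check out. For the forward implication, factoring $f=e\circ h$ through the corestriction and observing that $\cl_b(f(X))=Y$ reduces everything to Theorem \ref{th11}(2), and your transfer of the universal property across the homeomorphism $h$ (including the uniqueness clause, which uses only surjectivity of $h$) is sound. For the converse, the two ingredients you need are both available: Remark \ref{rem2}(1) gives the comparison homeomorphism $h:X^s\longrightarrow Y$ with $h\circ\mu_X=f$, and the invariance of the $b$-topology under homeomorphism is immediate since $h(U\cap\cl(x))=h(U)\cap\cl(h(x))$, so basic $b$-opens go to basic $b$-opens and $b$-density is preserved. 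One small remark: the paper's earlier Remark already asserts that each ${\bf Sob}$-reflection is a $b$-dense embedding, so if one reads ``each reflection'' as ``every map satisfying the universal property,'' the converse direction is essentially being restated there; your argument via the canonical sobrification plus uniqueness-up-to-homeomorphism is the careful way to justify that reading, and it is the standard route taken in the cited sources.
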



A space $X$ is  a \emph{retract} of space $Y$,  if there are two continuous maps $s:X\longrightarrow Y$ (the \emph{section}) and $r:Y\longrightarrow X$ (the \emph{retraction}) such that $r\circ s={\rm id}_X$, the identity mapping on $X$.
	We call $X$ a \emph{$b$-retract} of $Y$  if $X$ is a retraction of $Y$ such that $s(X)$ is $b$-dense in $Y$.

\begin{remark}\label{rem1}
Every section $s:X\longrightarrow Y$ is an embedding and every retraction $r:Y\longrightarrow X$ is a quotient mapping.
\end{remark}

\begin{proposition}[\cite{skula,shen1}]\label{prop1}
 If  $X$ and $Y$ are $T_0$ spaces and  $X$ is a $b$-retract of $Y$, then $X$ is homeomorphic to $Y$.
\end{proposition}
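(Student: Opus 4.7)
The plan is to show that the section $s\colon X\to Y$ is actually surjective. By Remark~\ref{rem1}, $s$ is a topological embedding, so once surjectivity is established, $s\colon X\to Y$ is automatically a homeomorphism, giving the conclusion. Throughout, I will freely use that continuous maps between $T_0$ spaces are monotone for the specialization order, that open sets are upper sets, and that closed sets of the form $\cl(y)$ coincide with $\da y$.

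Suppose for contradiction that there exists $y\in Y\setminus s(X)$, and put $x:=r(y)\in X$, so $s(x)\in s(X)$. The strategy is to prove $y=s(x)$ using the $T_0$ separation of $Y$, which then contradicts $y\notin s(X)$. For the first inequality $y\le s(x)$, I would assume $y\not\le s(x)$. Then $y$ lies in the open set $U:=Y\setminus\cl(s(x))$, so the basic $b$-open set $U\cap\cl(y)$ is nonempty. By $b$-density of $s(X)$, pick $z=s(x')\in s(X)\cap U\cap\da y$. Applying $r$ and using $r\circ s=\mathrm{id}_X$ together with monotonicity of $r$ gives $x'=r(z)\le r(y)=x$, and then monotonicity of $s$ yields $z=s(x')\le s(x)$, i.e., $z\in\cl(s(x))$, contradicting $z\in U$.

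The reverse inequality $s(x)\le y$ is the more delicate step. Suppose there is an open $V\subseteq Y$ with $s(x)\in V$ and $y\notin V$. The key move is to pull $V$ back to an open neighbourhood of $y$ via the retraction by setting
\[
W:=r^{-1}\bigl(s^{-1}(V)\bigr).
\]
Then $W$ is open in $Y$, and $y\in W$ because $s(r(y))=s(x)\in V$. By $b$-density of $s(X)$, the set $W\cap\cl(y)$ meets $s(X)$; choose $z=s(x')\in W$ with $z\le y$. The membership $z\in W$ unfolds to $s(r(z))\in V$, and since $r(s(x'))=x'$ we have $s(r(z))=s(x')=z$, so $z\in V$. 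But $V$ is open, hence an upper set in the specialization order, so $z\le y$ forces $y\in V$, a contradiction.

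Together the two inequalities give $y=s(x)\in s(X)$ by $T_0$-separation of $Y$, contradicting our choice of $y$. Hence $s(X)=Y$, and $s$ is a homeomorphism. I expect the main obstacle to be pinpointing the right open set used in the second step; the construction $W=r^{-1}(s^{-1}(V))$ is what allows the $b$-density of $s(X)$ to be combined with the retraction identity $r\circ s=\mathrm{id}_X$ to force $z$ itself into $V$, which is where the contradiction with $z\le y\notin V$ is produced.
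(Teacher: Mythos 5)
Your proof is correct. The paper itself does not prove Proposition~\ref{prop1} (it defers to \cite{skula,shen1}), but the natural in-paper route is to apply the uniqueness statement of Lemma~\ref{lem1}(1) to the two continuous maps $s\circ r$ and $\mathrm{id}_Y$ from $Y$ to $Y$: both compose with the $b$-dense map $s$ to give $s$, hence $s\circ r=\mathrm{id}_Y$, and $s$, $r$ are mutually inverse homeomorphisms. Your argument is exactly a pointwise unpacking of that: your Step~1 shows $y\le s(r(y))$ and your Step~2 shows $s(r(y))\le y$, so $T_0$-separation gives $s(r(y))=y$ for every $y$, i.e.\ $s\circ r=\mathrm{id}_Y$. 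Both inequalities are established by the same mechanism as in the proof of Lemma~\ref{lem1}(1) (intersect a suitable open neighbourhood of $y$ with $\cl(y)$ and invoke $b$-density); your choice $W=r^{-1}(s^{-1}(V))$ in Step~2 plays the role of $g_2^{-1}(V)$ there. The element-level version costs a little more writing but is self-contained and avoids quoting the lemma, which in the paper only appears in the following section; otherwise the two arguments buy the same thing.
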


\begin{theorem}[\cite{shen1}]\label{th101}
Let ${\bf K}$ be a reflective subcategory of ${\bf Top_0}$ such that ${\bf K}\nsubseteq {\bf Top_1}$.  Then each ${\bf K}$-reflection is a $b$-dense embedding.
\end{theorem}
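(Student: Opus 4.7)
The plan is to extract from the hypothesis ${\bf K}\nsubseteq{\bf Top_1}$ a single ``test space'' and exploit the Sierpinski-like behaviour of two of its comparable points, playing continuous maps into it against the universal property of $\mu_X$. Concretely, pick $K_0\in{\bf K}$ together with distinct points $a,b\in K_0$ with $a\leq b$ in the specialization order, and an open $W\subseteq K_0$ separating them, $b\in W$, $a\notin W$; the fact that $K_0\in{\bf K}$ is crucial so that the universal property applies to maps $X\to K_0$. For every open $U\subseteq X$, the map $f_U:X\to K_0$ defined by $f_U(U)=\{b\}$, $f_U(X\setminus U)=\{a\}$ is continuous (the only opens of $K_0$ meeting $\{a,b\}$ are those containing $b$ alone or both of $a,b$, so preimages under $f_U$ are $\emptyset$, $U$, or $X$), and the universal property produces a unique continuous $g_U:X^k\to K_0$ with $g_U\circ\mu_X=f_U$.

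For injectivity, given $x\neq y$ in $X$, the $T_0$ condition yields an open $U$ containing exactly one of them; then $g_U$ sends one of $\mu_X(x),\mu_X(y)$ to $b$ and the other to $a$, so $\mu_X(x)\neq\mu_X(y)$. For the embedding property, set $V_U:=g_U^{-1}(W)$; a direct computation gives $\mu_X^{-1}(V_U)=f_U^{-1}(W)=U$, so every open of $X$ arises as the $\mu_X$-preimage of an open of $X^k$, which combined with continuity of $\mu_X$ shows that $\mu_X:X\to\mu_X(X)$ is a homeomorphism.

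The core step is $b$-density. Suppose to the contrary that $\mu_X(X)$ is not $b$-dense. Then there exist $p\in X^k$ and an open $V_0\ni p$ with $V_0\cap\cl(p)\cap\mu_X(X)=\emptyset$. Put $V_1:=V_0\setminus\cl(p)$; this is open because $\cl(p)$ is closed, and strictly smaller than $V_0$ since $p\in V_0\cap\cl(p)$. The crucial observation is that $V_0\cap\mu_X(X)=V_1\cap\mu_X(X)$, because their difference is contained in $V_0\cap\cl(p)$, which misses $\mu_X(X)$. Now define $g_0,g_1:X^k\to K_0$ by the same Sierpinski recipe as above, using $V_0$ and $V_1$ in place of $V_U$: each $g_i$ sends $V_i$ to $b$ and its complement to $a$. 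Each $g_i$ is manifestly continuous, and because $V_0,V_1$ have the same trace on $\mu_X(X)$, we get $g_0\circ\mu_X=g_1\circ\mu_X$. Uniqueness in the universal property of $\mu_X$ (applied to the common map $X\to K_0$) forces $g_0=g_1$, hence $V_0=g_0^{-1}(W)=g_1^{-1}(W)=V_1$, contradicting $p\in V_0\setminus V_1$.

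The main obstacle, conceptually, is spotting that the correct pair of open sets to feed into the uniqueness clause is $V_0$ and $V_0\setminus\cl(p)$; the non-$T_1$ hypothesis is precisely what makes $K_0$ function as a Sierpinski-style detector of open sets in $X^k$, and this is what powers both the embedding argument and the $b$-density argument. All other steps are routine continuity verifications and set-theoretic manipulations.
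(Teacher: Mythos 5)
Your proof is correct and complete. Note that the paper itself gives no proof of this theorem --- it is quoted from the reference \cite{shen1} --- but your argument is exactly the natural self-contained one: using the non-$T_1$ witness $K_0\in{\bf K}$ (rather than $\Sigma 2$, whose membership in ${\bf K}$ is only established \emph{later}, in Theorem~\ref{th91}, as a consequence of this very result) as a Sierpi\'nski-style detector of open sets avoids any circularity, and each of the three steps (injectivity, the embedding property via $\mu_X^{-1}(V_U)=U$, and the $b$-density argument comparing $V_0$ with $V_0\setminus\cl(p)$ through the uniqueness clause of the universal property) is sound.
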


\section{Main results}
We now prove our main results.

Recall that the \emph{Sierpi\'{n}ski space}  is the Scott space  $\Sigma 2$ of the two elements chain $2=\{0,1\}$.

\begin{remark} [\cite{redbook,goubault}]\label{rem90}
\II	
\I[(1)] $(\Sigma 2)^M=\Sigma (2^M,\subseteq)$.
\I[(2)] Let $X$ be a $T_0$ space and $M=\mathcal O(X)$.
Then the mapping $e:X\longrightarrow(\Sigma 2)^M$,  $x\mapsto (\chi_{U}(x))_{U\in\mathcal O(X)}$, is an embedding. Hence, by Theorem \ref{th11}, $X$ is a sober space iff $e(X)$ is a $b$-closed subspace of $(\Sigma 2)^M$.
\III
\end{remark}

\begin{lemma}\label{lem93}
	Let $X$ be a $T_0$ space, $x_1, x_2\in X$. If $x_1 < x_2$, then $A=\{x_1,x_2\}$ is a $b$-closed subspace of $X$ that is homeomorphic to $\Sigma 2$.
\end{lemma}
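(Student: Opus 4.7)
The plan is to verify the two assertions separately: first that the subspace $A = \{x_1, x_2\}$ is homeomorphic to $\Sigma 2$, and second that $A$ is $b$-closed in $X$.

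For the homeomorphism, I will use the strict inequality $x_1 < x_2$ to pin down the subspace topology on $A$. From $x_1 < x_2$ we have $x_1 \in \cl(x_2)$ and, by antisymmetry of the specialization order on a $T_0$ space, $x_2 \notin \cl(x_1)$. Consequently every open set of $X$ containing $x_1$ also contains $x_2$, so the $T_0$-separator of the distinct points $x_1, x_2$ must be an open set $U$ with $x_2 \in U$ and $x_1 \notin U$. Intersecting with $A$ then yields $\{x_2\} \in \mathcal{O}(A)$ while $\{x_1\} \notin \mathcal{O}(A)$, so $\mathcal{O}(A) = \{\emptyset, \{x_2\}, A\}$, giving the desired homeomorphism $x_1 \mapsto 0$, $x_2 \mapsto 1$ onto $\Sigma 2$.

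For the $b$-closedness of $A$, I will show that $X \setminus A$ is open in the $b$-topology by producing, for each $y \in X \setminus A$, a basic $b$-open neighborhood of $y$ disjoint from $A$. The candidate is $U \cap \cl(y)$ with $U$ open and $y \in U$; such a set always contains $y$. For each $i \in \{1,2\}$ I treat two cases: if $x_i \not\le y$, then $x_i \notin \cl(y)$ and $x_i$ is automatically excluded; if $x_i \le y$, then $x_i \ne y$ forces $x_i < y$, so $x_i \in \cl(y)$, and every open neighborhood of $x_i$ contains $y$, which in turn forces the $T_0$-separator of $x_i$ and $y$ to be an open set $U_i$ with $y \in U_i$ and $x_i \notin U_i$. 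Taking $U$ to be the (finite) intersection of those $U_i$ corresponding to indices $i$ with $x_i < y$ (or $U = X$ if there are none), we obtain an open neighborhood of $y$ such that $U \cap \cl(y)$ is disjoint from $A$.

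The main obstacle is slight: the whole argument is a careful case analysis, and the only delicate point is arguing in the correct direction for the $T_0$-separator in the case $x_i < y$. An alternative, more conceptual route would be to verify once and for all that the $b$-topology of any $T_0$ space is itself $T_1$ (by essentially the same one-line argument applied to an arbitrary pair $y \ne z$), after which every finite subset, and in particular $\{x_1, x_2\}$, is automatically $b$-closed.
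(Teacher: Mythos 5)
Your proof is correct and follows essentially the same route as the paper: both arguments exhibit, for each $y\notin A$, a basic $b$-open neighbourhood $U\cap\cl(y)$ missing $A$, with $U$ built from complements of point closures ($X\setminus\da x_i$), and both identify the subspace topology on $A$ as $\{\emptyset,\{x_2\},A\}$ via the $T_0$ separation of $x_1<x_2$. Your closing observation that the $b$-topology of any $T_0$ space is $T_1$ (so every finite set is $b$-closed) is a valid and more conceptual shortcut, but the argument you actually carry out is the same case analysis the paper performs, merely organized per point $x_i$ rather than by the paper's cases (c1)/(c2).
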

\begin{proof}
	It suffices to prove that $X\setminus\{x_1,x_2\}$ is $b$-open. Let $x\in X\setminus\{x_1,x_2\}$.
	We need to find an open neighborhood $U$ of $x$ such that $\da x\cap U\cap \{x_1,x_2\}=\emptyset$.
	If $\da x\cap \{x_1,x_2\}=\emptyset$, then just need to  take $U=X$.
	Now assume $\da x\cap \{x_1,x_2\}\neq\emptyset$. We consider the following cases:
	
	(c1) $x_1,x_2\in \da x$, i.e., $x_1<x_2< x$. We let $U=X\setminus\da x_2$, then $x\in U$, and $\{x_1,x_2\}\cap U=\emptyset$, and hence $\da x\cap U\cap \{x_1,x_2\}=\emptyset$.
	
	(c2) $x_1\in\da x$ and $x_2\notin \da x$, i.e., $x_1< x$ but $x_2\nleq x$. Let $U=X\setminus\da x_1$. Then $x\in U$ and $\da x\cap U\cap \{x_1,x_2\}=\emptyset$.
	
	All these together show that $X\setminus\{x_1,x_2\}$ is $b$-open, and thus $\{x_1,x_2\}$ is $b$-closed. Note that $\{x_2\}=(X\setminus\da x_1)\cap\{x_1,x_2\}$, which is the unique non-trivial open set in $\{x_1,x_2\}$. Hence, the subspace $\{x_1,x_2\}$ is homeomorphic to $\Sigma 2$.
\end{proof}

The following lemma generalizes a result in  \cite[I-2.5]{skula}.

\begin{lemma}  \label{lem1} 
	Let $X,Y,Z\in {\bf Top_0}$, $k:X\longrightarrow Y$ be a continuous mapping such that $f(X)$ is $b$-dense in $Y$, and $f:X\longrightarrow Z$ be a continuous mapping.	
\II	
\I[$(1)$] There exists at most one continuous mapping $g:Y\longrightarrow Z$ such that $f=g\circ k$.
\I[$(2)$] If $g:Y\longrightarrow Z$ is a continuous mapping such that $f=g\circ k$, then $g(Y)\subseteq \cl_{b}(f(X))$, where $\cl_{b}(f(X))$ is the $b$-closure of $f(X)$ in $Z$.
\III	
\end{lemma}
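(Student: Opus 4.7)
The plan is to reduce both parts to a neighbourhood-base description of the $b$-topology, together with the standard observation that in a $T_0$ space the equalizer of two continuous maps is $b$-closed. (I am reading the hypothesis as ``$k(X)$ is $b$-dense in $Y$'', since $f$ lands in $Z$.)

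First I would record the following working fact about the $b$-topology of any $T_0$ space $S$: for each $p\in S$, the sets $W\cap\da p$, with $W$ ranging over open neighbourhoods of $p$, form a neighbourhood base at $p$ in $bS$. This is immediate from the definition, because $W\cap\da p = W\cap\cl(p)$ is itself a basic $b$-open set, and any basic $b$-open $U\cap\cl(x)$ that contains $p$ satisfies $p\le x$, hence $\cl(p)\subseteq\cl(x)$ and $U\cap\cl(p)\subseteq U\cap\cl(x)$. In particular $p\in\cl_b(A)$ iff $W\cap\da p\cap A\neq\emptyset$ for every open neighbourhood $W$ of $p$.

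For (1), suppose $g_1,g_2:Y\to Z$ are continuous with $g_1\circ k=g_2\circ k=f$, and consider the equalizer $E=\{y\in Y:g_1(y)=g_2(y)\}$. Since $k(X)\subseteq E$ and $k(X)$ is $b$-dense in $Y$, it suffices to show that $E$ is $b$-closed in $Y$. Fix $y\notin E$; then $g_1(y)\neq g_2(y)$, and since $Z$ is $T_0$ we may assume, by symmetry, that $g_1(y)\not\le g_2(y)$, i.e.\ $g_1(y)\notin\cl(g_2(y))$. Set $W=Z\setminus\cl(g_2(y))$ and $U=g_1^{-1}(W)$; then $U$ is an open neighbourhood of $y$ in $Y$. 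I would verify $\da y\cap U\cap E=\emptyset$: if $y'$ lies in this intersection, then $y'\le y$, so continuity of $g_2$ gives $g_2(y')\le g_2(y)$, i.e.\ $g_2(y')\in\cl(g_2(y))$; but $y'\in E\cap U$ forces $g_2(y')=g_1(y')\in W=Z\setminus\cl(g_2(y))$, a contradiction. Hence $U\cap\da y$ is a basic $b$-open neighbourhood of $y$ disjoint from $E$, so $E$ is $b$-closed and $E=Y$.

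For (2), fix $y\in Y$ and an arbitrary open neighbourhood $W$ of $g(y)$ in $Z$; by continuity, $U:=g^{-1}(W)$ is an open neighbourhood of $y$ in $Y$. The $b$-density of $k(X)$ in $Y$ applied to the basic $b$-open set $U\cap\da y$ yields some $x\in X$ with $k(x)\in U\cap\da y$. Then $f(x)=g(k(x))\in W$, and $k(x)\le y$ together with continuity of $g$ gives $f(x)=g(k(x))\le g(y)$, so $f(x)\in W\cap\da g(y)\cap f(X)$. Varying $W$ and invoking the neighbourhood base from the first paragraph shows $g(y)\in\cl_b(f(X))$.

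The only delicate step is the verification that the equalizer is $b$-closed: it requires combining the $T_0$ separation of $Z$ with the continuity of both maps, and using the specialization order to translate the inequality $g_1(y)\not\le g_2(y)$ into an open neighbourhood in $Y$ that witnesses $b$-openness of $Y\setminus E$. The rest is a routine unpacking of the $b$-topology.
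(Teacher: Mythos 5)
Your proof is correct, and you rightly read the hypothesis as ``$k(X)$ is $b$-dense in $Y$'' (the $f(X)$ in the statement is a typo). Part (2) coincides with the paper's argument essentially line for line. Part (1), however, takes a genuinely different route: the paper fixes $y\in Y$ and argues directly and symmetrically that $g_1(y)\le g_2(y)$ and $g_2(y)\le g_1(y)$ --- for each open $V\ni g_1(y)$ it uses $b$-density of $k(X)$ in the basic $b$-open set $\da y\cap g_1^{-1}(V)$ together with $g_1^{-1}(V)\cap k(X)=g_2^{-1}(V)\cap k(X)$ to conclude $g_2(y)\in V$, and then invokes $T_0$ of $Z$ --- whereas you show that the equalizer $E=\{y:g_1(y)=g_2(y)\}$ is $b$-closed and contains the $b$-dense set $k(X)$, hence is all of $Y$. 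Your verification that $E$ is $b$-closed is exactly the computation the paper performs later in the implication $(3)\Rightarrow(1)$ of its Lemma~\ref{th007}, so your argument in effect factors part (1) through that lemma; this is slightly more structural (it isolates the reusable fact ``equalizers of continuous maps into a $T_0$ space are $b$-closed'') at the cost of one extra reduction, while the paper's pointwise argument is self-contained and avoids mentioning equalizers at this stage. Both are sound; your preliminary observation that the sets $W\cap\da p$ form a neighbourhood base at $p$ for the $b$-topology is the same fact the paper uses implicitly throughout.
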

\begin{proof}
(1)	Suppose that there exist two continuous mappings $g_1,g_2:Y\longrightarrow Z$ such that $g_1\circ k=g_2\circ k=f$:
	$$\xymatrix@C=6em@R=6ex{ 
		X\ar@{->}[rd]_{f}\ar@{->}[r]^{k}&Y\ar@{->}[d]^{g_1, g_2}\\
		&Z.
	}$$               
	Let $y\in Y$. Suppose $V\in\mathcal O(Z)$ such that $g_1(y)\in V$. Then $y\in g_1^{-1}(V)\in\mathcal O(Y)$. It follows that  $\da_Yy\cap g_1^{-1}(V)$ is  $b$-open in $Y$, and since $k(X)$ is $b$-dense in $Y$,  $\da_Y y\cap g_1^{-1}(V)\cap k(X)\neq\emptyset$. Since $g_1\circ k=g_2\circ k=f$, we deduce that $g_1^{-1}(V)\cap k(X)=g_2^{-1}(V)\cap k(X)$. Then $\da_Yy\cap g_2^{-1}(V)\cap k(X)\neq\emptyset$, and hence $y\in \ua_Yg_2^{-1}(V)=g_2^{-1}(V)$, i.e., $g_2(y)\in V$. 
	All these show that each open neighborhood of $g_1(y)$ contains $g_2(y)$, that is, $g_1(y)\in\cl_Z(g_2(y))$. Dually, it holds that $g_2(y)\in\cl_Z(g_1(y))$. Since $Z$ is a $T_0$ space, we have that $g_1(y)=g_2(y)$. Therefore, $g_1=g_2$.
	
	(2)  Let $y\in Y$ and $V\in\mathcal O(Z)$ such that $g(y)\in V$. Then $y\in g^{-1}(V)\in\mathcal O(Y)$, and since $k(X)$ is $b$-dense in $Y$, $\da y\cap g^{-1}(V)\cap k(X)\neq\emptyset$. Then there exists $x_0\in X$ such that $k(x_0)\in \da y\cap g^{-1}(V)$, which implies that $g(y)\geq g(k(x_0))=f(x_0)\in V$. It follows that  $f(x_0)\in \da g(y)\cap V\cap f(X)\neq\emptyset$. This shows that $g(y)\in\cl_{b}(f(X))$. Hence, $g(Y)\subseteq \cl_{b}(f(X))$.
\end{proof}

\begin{theorem}\label{th91}
	Let ${\bf K}$ be a reflective subcategory of ${\bf Top_0}$ such that ${\bf K}\nsubseteq {\bf Top_1}$. Then the following statements hold.
\II	
\I[$(1)$] ${\bf K}$ is $b$-closed-hereditary.
\I[$(2)$] The Sierpi\'{n}ski space $\Sigma 2\in {\bf K}$.
Hence, for any set $M$, the product $(\Sigma 2)^M\in{\bf K}$.
\I[$(3)$] ${\bf Sob}\subseteq {\bf K}$.
\III
\end{theorem}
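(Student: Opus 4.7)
My plan is to establish (1), (2), (3) in that order, since each step uses the previous one.

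For (1), I would take $X\in {\bf K}$ and let $A$ be a $b$-closed subspace of $X$. Let $\mu_A:A\longrightarrow A^k$ be the ${\bf K}$-reflection of $A$; by Theorem \ref{th101} it is a $b$-dense embedding. The inclusion $i:A\longrightarrow X$ lands in a space of ${\bf K}$, so the universal property produces a continuous $g:A^k\longrightarrow X$ with $g\circ\mu_A=i$. I then apply Lemma \ref{lem1}(2) to get $g(A^k)\subseteq \cl_{b}(i(A))=\cl_{b}(A)=A$, where the last equality uses that $A$ is $b$-closed in $X$. Corestricting $g$ to $A$ yields a continuous retraction $r:A^k\longrightarrow A$ with $r\circ\mu_A={\rm id}_A$, exhibiting $A$ as a $b$-retract of $A^k$. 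Proposition \ref{prop1} then yields $A\cong A^k\in{\bf K}$, and (K2) places $A$ in ${\bf K}$.

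For (2), the hypothesis ${\bf K}\nsubseteq {\bf Top_1}$ produces some $X\in{\bf K}$ with two distinct comparable points $x_1<x_2$. Lemma \ref{lem93} then shows $\{x_1,x_2\}$ is a $b$-closed subspace of $X$ homeomorphic to $\Sigma 2$, so part (1) and (K2) give $\Sigma 2\in{\bf K}$. For the power $(\Sigma 2)^M$, let $\mu:(\Sigma 2)^M\longrightarrow Y$ be its ${\bf K}$-reflection (again a $b$-dense embedding by Theorem \ref{th101}). Each coordinate projection $\pi_\alpha:(\Sigma 2)^M\longrightarrow \Sigma 2$ extends along $\mu$ to a continuous $f_\alpha:Y\longrightarrow\Sigma 2$, and the universal property of the product assembles the family $\{f_\alpha\}$ into a single continuous $h:Y\longrightarrow(\Sigma 2)^M$. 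Since $\pi_\alpha\circ h\circ\mu = f_\alpha\circ\mu = \pi_\alpha$ for every $\alpha$, we obtain $h\circ\mu = {\rm id}_{(\Sigma 2)^M}$; thus $(\Sigma 2)^M$ is a $b$-retract of $Y$, whence $(\Sigma 2)^M\cong Y\in{\bf K}$ by Proposition \ref{prop1}.

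For (3), given a sober space $X$, take the canonical embedding $e:X\longrightarrow(\Sigma 2)^{\mathcal O(X)}$ of Remark \ref{rem90}(2). Since $X$ is sober, Remark \ref{rem90}(2) (via Theorem \ref{th11}(1)) says $e(X)$ is a $b$-closed subspace of $(\Sigma 2)^{\mathcal O(X)}$. Part (2) puts the ambient product in ${\bf K}$, part (1) then puts $e(X)$ in ${\bf K}$, and (K2) finally delivers $X\in{\bf K}$.

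I expect the main subtlety to lie in part (2): one must notice that, exactly as in (1), a $b$-dense embedding reflection combined with a universal property (now of the product) produces a retraction that promotes the reflection to an isomorphism via Proposition \ref{prop1}. Once this ``$b$-retract pattern'' is in hand, parts (1) and (3) are direct applications of it together with Remark \ref{rem90}.
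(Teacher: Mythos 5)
Your proposal is correct and follows essentially the same route as the paper: part (1) via the $b$-retract argument combining Theorem \ref{th101}, Lemma \ref{lem1} and Proposition \ref{prop1}; part (2) via Lemma \ref{lem93} applied to a non-$T_1$ member of ${\bf K}$; and part (3) via the embedding of Remark \ref{rem90}. The only difference is that for $(\Sigma 2)^M$ the paper simply cites the general fact that reflective subcategories are productive, whereas you reprove this special case explicitly by the same $b$-retract pattern, which is a harmless (and self-contained) substitution.
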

\begin{proof}

(1)  Let $X\in {\bf K}$, $A$ be a $b$-closed subspace of $X$, and $\mu_A:A\longrightarrow A^k$ be the  ${\bf K}$-reflection for $A$. Then $\mu_A(A)$ is a $b$-dense subset of $A^k$ by Theorem \ref{th101}. Consider the inclusion mapping $e:A\longrightarrow X$, $x\mapsto x$. Then there exists a unique continuous mapping $f: A^k\longrightarrow X$ such that $f\circ \mu_A=e$:
	$$\xymatrix@C=6em@R=6ex{
		A\ar@{->}[r]^{\mu_{A}}\ar@{->}[rd]_{e} &A^k\ar@{-->}[d]^{f}\\
		&	X.
	}$$
	Then by Lemma \ref{lem1}, we have  $f(A^k)\subseteq \cl_{b}(e(A))=A$, which shows that $A$ is a b-dense retract of $A^k$. By Proposition \ref{prop1} $A$ is homeomorphic to $A^k$, and since $A^k\in{\bf K}$, it follows that $A\in {\bf K}$.

(2)	 Since ${\bf K}\nsubseteq {\bf Top_1}$, there exists a $T_0$, but not $T_1$ space $X\in {\bf K}$. Then there exists $x_1,x_2\in X$ such that $x_1<x_2$ (otherwise, each singleton of $X$ is closed, i.e., $X$ is $T_1$). By Lemma \ref{lem93}, the subspace $\{x_1,x_2\}$ is a $b$-closed subspace of $X$, and by result (1), we have $\{x_1,x_2\}\in {\bf K}$, and since the subspace $\{x_1,x_2\}$ is homeomorphic to $\Sigma 2$, which implies $\Sigma 2\in {\bf K}$.
Since ${\bf K}$ is reflective, the product of its objects is also in ${\bf K}$, i.e., ${\bf K}$ is productive, hence we have $(\Sigma 2)^M\in {\bf K}$.	

(3) Let $X\in{\bf Sob}$. By Remark \ref{rem90}, there is an embedding $e:X\longrightarrow (\Sigma 2)^M$ such that $e(X)$ is a $b$-closed subspace of  $(\Sigma 2)^M$. By Step 1, $(\Sigma 2)^M\in{\bf K}$ and since ${\bf K}$ is $b$-closed-hereditary, we have that $e(X)\in {\bf K}$, and since $X$ is homoemorphic to $e(X)$, it follows that $X\in {\bf K}$. Hence, ${\bf Sob}\subseteq {\bf K}$.
\end{proof}

Since  lower subsets and saturated subsets of a topological space are $b$-closed \cite{KeimelLawson,skula}, the following corollary is immediate from Theorem \ref{th91}.
\begin{corollary}
Let ${\bf K}$ be a reflective subcategory of ${\bf Top_0}$ such that ${\bf K}\nsubseteq {\bf Top_1}$. Suppose $X\in {\bf K}$ and $A\subseteq X$.  If $A=\da A$ or $A=\ua A$, then $A$ as a subspace of $X$ is in {\bf K}.
\end{corollary}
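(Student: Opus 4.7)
The plan is essentially to obtain the corollary as a one-step application of Theorem~\ref{th91}(1). That theorem has already done all the categorical work: it establishes that every reflective subcategory $\mathbf{K}\subseteq\mathbf{Top_0}$ with $\mathbf{K}\nsubseteq\mathbf{Top_1}$ is $b$-closed-hereditary. So all that remains is the purely order-topological observation: in any $T_0$ space $X$, a subset $A$ satisfying $A=\da A$ or $A=\ua A$ is $b$-closed in $X$. This is the standard Skula-topology fact the authors quote from \cite{KeimelLawson,skula} in the sentence preceding the statement.

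To make the paper self-contained for the saturated case, the cleanest check is from the definition: write $A=\ua A=Sat(A)=\bigcap\{U\in\mathcal O(X):A\subseteq U\}$, and note that each open $U$ is $b$-closed because its complement $X\setminus U=\bigcup_{x\notin U}\cl(x)$ is a union of basic $b$-opens $X\cap\cl(x)$ and hence $b$-open; intersections of $b$-closed sets are $b$-closed, so $A$ is $b$-closed. The lower-set case is dual and can simply be cited. With both in hand, $A$ is a $b$-closed subspace of $X\in\mathbf{K}$, so Theorem~\ref{th91}(1) delivers $A\in\mathbf{K}$ at once.

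I expect no nontrivial obstacle. The technical heart of the argument --- extracting $b$-closed-heredity from reflectivity via the $b$-dense embedding property of the $\mathbf{K}$-reflection (Theorem~\ref{th101}) together with Proposition~\ref{prop1} --- was already carried out in Theorem~\ref{th91}(1). This corollary is best viewed as an advertisement of two familiar special cases (lower and saturated subsets) to which that heredity applies.
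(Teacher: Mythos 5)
Your route is the same as the paper's: read the corollary off from Theorem~\ref{th91}(1) once the relevant subsets are known to be $b$-closed. Your verification of the saturated case is correct and self-contained: each open $U$ is $b$-closed because $X\setminus U=\bigcup_{x\notin U}\cl(x)$ is a union of basic $b$-open sets, $b$-closed sets are stable under arbitrary intersections, and $A=\ua A=Sat(A)$ is an intersection of opens by Remark~1.1(1). That half of the statement is fine and matches what the paper does.

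The gap is the sentence ``the lower-set case is dual and can simply be cited.'' There is no duality to invoke: the $b$-topology is built from the open sets and the point closures, and this is not invariant under reversing the specialization order. What is true (and what the sources actually give) is that \emph{closed} sets are $b$-closed; a lower set is only a \emph{union} of point closures, i.e.\ a union of $b$-closed sets, and $b$-closed sets are closed under arbitrary intersections but not arbitrary unions. Concretely, let $X=\mathbb N\cup\{\infty\}$ be the chain $\omega+1$ with its Scott topology (nonempty opens are the sets $\ua n$, $n\in\mathbb N$). Then $X$ is sober and $A=\mathbb N$ is a lower set, but the subspace $A$ is an irreducible closed subset of itself with no generic point, hence not sober; equivalently, by Theorem~\ref{th11}(1), $A$ is not $b$-closed in $X$ (its $b$-closure is all of $X$, since every $b$-open neighbourhood of $\infty$ contains some $\ua n$). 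Taking ${\bf K}={\bf Sob}$, this shows the lower-set half of the corollary is itself false as stated, so no argument can close that case; the paper's one-line justification (``lower subsets \dots\ are $b$-closed'') contains the same slip, and the hypothesis should read ``$A$ is closed or $A=\ua A$.'' In short: your proof is complete exactly where the paper's is, and the step you defer to ``duality'' is precisely the step that cannot be repaired.
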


Let ${\bf Sier}$ be the full subcategory of ${\bf Top_0}$ consisting of all $T_0$ spaces $X$ which are  homeomorphic to $\Sigma 2$.

Recall that the reflective hull of a subcategory ${\bf C}$ of ${\bf Top_0}$ is the smallest reflective subcategory of ${\bf Top_0}$  containing ${\bf C}$.

\begin{corollary}[\cite{nel}]
	The reflective hull of {\bf Sier} in ${\bf Top_0}$ is ${\bf Sob}$.
\end{corollary}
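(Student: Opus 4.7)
The plan is to establish the two inclusions that characterise a reflective hull: first, that $\mathbf{Sob}$ is a reflective subcategory of $\mathbf{Top_0}$ containing $\mathbf{Sier}$, and second, that every reflective subcategory of $\mathbf{Top_0}$ containing $\mathbf{Sier}$ must contain $\mathbf{Sob}$. Together these force $\mathbf{Sob}$ to be the smallest reflective subcategory above $\mathbf{Sier}$.

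For the first inclusion, I would simply recall the two facts already recorded in the excerpt: $\mathbf{Sob}$ is reflective in $\mathbf{Top_0}$ (this is stated in the remark immediately after the definition of reflectivity, citing Keimel--Lawson), and the Sierpi\'nski space $\Sigma 2$ is sober, since its only nonempty irreducible closed sets are $\{0\}$ and $\{0,1\}$, each of which is the closure of a unique point. Hence $\mathbf{Sier} \subseteq \mathbf{Sob}$, and $\mathbf{Sob}$ is a candidate reflective subcategory containing $\mathbf{Sier}$.

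For the second inclusion, let $\mathbf{K}$ be any reflective subcategory of $\mathbf{Top_0}$ with $\mathbf{Sier}\subseteq\mathbf{K}$. Then $\Sigma 2 \in \mathbf{K}$, and because $\Sigma 2$ is a $T_0$ space that is not $T_1$ (its point $0$ is not closed), we conclude $\mathbf{K}\nsubseteq\mathbf{Top_1}$. The hypothesis of Theorem \ref{th91} is therefore satisfied, and its part (3) gives $\mathbf{Sob}\subseteq\mathbf{K}$, as required.

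The argument has no genuine obstacle: all the work has already been done in Theorem \ref{th91}, whose proof shows that every sober space embeds $b$-closedly into a power of $\Sigma 2$ and that $b$-closed subspaces of objects of $\mathbf{K}$ remain in $\mathbf{K}$. The only point worth emphasising in the write-up is the verification that $\Sigma 2$ itself lies in $\mathbf{Sier}$ (so that the ``$\mathbf{K}\nsubseteq\mathbf{Top_1}$'' clause of Theorem \ref{th91} is activated), after which the reflective hull description follows in one line.
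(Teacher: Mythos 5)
Your proposal is correct and follows essentially the same route as the paper: observe that $\Sigma 2\in\mathbf{K}$ forces $\mathbf{K}\nsubseteq\mathbf{Top_1}$, invoke Theorem \ref{th91}(3) to get $\mathbf{Sob}\subseteq\mathbf{K}$, and use the reflectivity of $\mathbf{Sob}$ itself for minimality. Your explicit check that $\Sigma 2$ is sober (so $\mathbf{Sier}\subseteq\mathbf{Sob}$) is a small detail the paper leaves implicit, but otherwise the arguments coincide.
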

\begin{proof}
Suppose ${\bf K}$ is a reflective subcategory of ${\bf Top_0}$ such that ${\bf Sier}\subseteq {\bf K}$. Note that $\Sigma 2$ is a $T_0$ but not a $T_1$ space, we have ${\bf K}\nsubseteq {\bf Top_1}$. By Theorem \ref{th91}, ${\bf Sob}\subseteq {\bf K}$. Since ${\bf Sob}$ is  reflective, it is the smallest reflective subcategory of ${\bf Top_0}$ having   {\bf Sier} as a subcategory. Therefore,  {\bf Sob} is the reflective hull of {\bf Sier} in ${\bf Top_0}$.
\end{proof}

\begin{lemma}[{\cite[Lemma 5, pp.116]{kelly}}]
If $\{f_i:X\longrightarrow Y_i\}_{i\in I}$ is a family of continuous mappings in ${\bf Top_0}$, then the diagonal $\varDelta f_i:X\longrightarrow \prod_{i\in I}Y_i$ is a continuous mapping, where
$$\forall x\in X,\ \varDelta f_i(x)=(f_i(x))_{i\in I}.$$ 
\end{lemma}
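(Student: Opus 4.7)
The plan is to verify continuity of the diagonal map on a subbase of the product topology, which is the standard route. The product topology on $\prod_{i\in I}Y_i$ has as a subbase the sets $\pi_j^{-1}(V_j)$ where $\pi_j:\prod_{i\in I}Y_i\longrightarrow Y_j$ is the canonical projection and $V_j\in\mathcal O(Y_j)$, so to prove $\varDelta f_i$ is continuous it suffices to check that the preimage of each such subbasic open set is open in $X$.

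First I would observe that, by the very definition of $\varDelta f_i$, the composite $\pi_j\circ \varDelta f_i$ coincides with $f_j$ for every $j\in I$, since $\pi_j(\varDelta f_i(x))=\pi_j((f_i(x))_{i\in I})=f_j(x)$. Then for any $V_j\in\mathcal O(Y_j)$,
\[
(\varDelta f_i)^{-1}(\pi_j^{-1}(V_j))=(\pi_j\circ\varDelta f_i)^{-1}(V_j)=f_j^{-1}(V_j),
\]
which is open in $X$ by continuity of $f_j$. Since preimages under $\varDelta f_i$ of all subbasic open sets of $\prod_{i\in I}Y_i$ are open, the preimages of arbitrary open sets are open, and hence $\varDelta f_i$ is continuous.

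There is no real obstacle here; this is the universal property of the product in $\mathbf{Top}_0$ (which is the same as the product in $\mathbf{Top}$, since a product of $T_0$ spaces is $T_0$). The only thing worth double checking is the $T_0$ condition on the codomain: if each $Y_i$ is $T_0$ then so is $\prod_{i\in I}Y_i$, because distinct points $(y_i)_{i\in I}\neq (z_i)_{i\in I}$ differ in some coordinate $j$, and then a $T_0$-separating open set for $y_j,z_j$ in $Y_j$ pulls back along $\pi_j$ to a $T_0$-separating open set in the product. Hence the lemma holds in $\mathbf{Top}_0$ as stated.
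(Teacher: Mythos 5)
Your proof is correct and is exactly the standard argument for this lemma (the paper itself gives no proof, simply citing Kelley, whose proof is the same subbase/universal-property computation $\pi_j\circ\varDelta f_i=f_j$). The added check that a product of $T_0$ spaces is $T_0$ is a sensible and correct observation, though not strictly needed for the continuity claim itself.
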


\begin{theorem}\label{th100}
Let ${\bf K}$	be a reflective subcategory of ${\bf Top_0}$ such that ${\bf K}\nsubseteq {\bf Top_1}$. Then ${\bf K}$ is reflective if and only if it is productive and $b$-closed-hereditary.
\end{theorem}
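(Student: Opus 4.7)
The forward direction is immediate: Theorem \ref{th91}(1) hands us $b$-closed-heredity, while productivity follows from the general fact that a reflective subcategory is closed under every limit existing in the ambient category, and in particular under arbitrary small products.

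For the converse, assume ${\bf K}$ is productive and $b$-closed-hereditary with ${\bf K}\nsubseteq{\bf Top_1}$. I would first reprise the argument of Theorem \ref{th91} to obtain ${\bf Sob}\subseteq{\bf K}$: a non-$T_1$ member of ${\bf K}$ together with Lemma \ref{lem93} and $b$-closed-heredity forces $\Sigma 2\in{\bf K}$, productivity then gives $(\Sigma 2)^M\in{\bf K}$ for every set $M$, and Remark \ref{rem90}(2) embeds every sober space $b$-closedly into such a power. I would also record an auxiliary observation: the equalizer $E=\{x:f(x)=g(x)\}$ of any parallel pair $f,g:X\to Y$ in ${\bf Top_0}$ is $b$-closed in $X$. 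Indeed, for $x\notin E$ the $T_0$ property of $Y$ supplies an open $V$ containing, say, $f(x)$ but not $g(x)$, whereupon $f^{-1}(V)\cap\cl(x)$ is a basic $b$-open neighbourhood of $x$ disjoint from $E$ (continuous maps preserve the specialization order). Combined with $b$-closed-heredity, this shows ${\bf K}$ is also closed under equalizers of morphisms between its objects.

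Fix now $X\in{\bf Top_0}$, let $e_X:X\to X^s$ be the sobrification, and identify $X$ with $e_X(X)\subseteq X^s$. My candidate for the reflection is
\[
X^k \;=\; \bigcap\{A\subseteq X^s : X\subseteq A \text{ and } A\in{\bf K}\},
\]
which is well defined since $X^s\in{\bf Sob}\subseteq{\bf K}$ belongs to the family. To confirm $X^k\in{\bf K}$, I would realise it as the diagonal subspace of the product $P=\prod_\alpha A_\alpha\in{\bf K}$ indexed by the $A_\alpha$ in the above family: that diagonal is the intersection over all pairs $(\alpha,\beta)$ of the equalizers in $P$ of the two composites $P\to A_\alpha\hookrightarrow X^s$ and $P\to A_\beta\hookrightarrow X^s$, and is therefore $b$-closed in $P$, hence an object of ${\bf K}$.

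It remains to check the universal property. The $b$-topology on $X^s$ restricts to the $b$-topology on $X^k$, so $X$ stays $b$-dense in $X^k$ and Lemma \ref{lem1}(1) handles uniqueness. For existence, given $f:X\to Z$ with $Z\in{\bf K}$, I would extend $e_Z\circ f$ to $\tilde f:X^s\to Z^s$ via the sobrification of $X$ and consider $B=\tilde f^{-1}(e_Z(Z))\subseteq X^s$. Presented as the pullback of $e_Z:Z\to Z^s$ along $\tilde f$, i.e.\ the equalizer in $X^s\times Z\in{\bf K}$ of $\tilde f\circ\pi_1$ and $e_Z\circ\pi_2$, this pullback is $b$-closed in $X^s\times Z$ and homeomorphic (via $\pi_1$) to $B$, so $B\in{\bf K}$. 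Since $X\subseteq B$, minimality of $X^k$ gives $X^k\subseteq B$, and $\tilde f|_{X^k}$ factors through $e_Z$ to produce the required $g:X^k\to Z$. I expect the principal technical obstacle to be exactly this limit bookkeeping, namely exhibiting both $X^k$ and the auxiliary set $B$ as small limits of ${\bf K}$-objects so that productivity and $b$-closed-heredity jointly deliver the membership statements on which the construction rests.
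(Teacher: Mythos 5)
Your proof is correct, but it follows a genuinely different route from the paper's. The paper constructs the reflection ``from above'': it forms the family $\Phi(X)$ of \emph{all} pairs $(Y,f)$ with $Y\in{\bf K}$ and $f(X)$ $b$-dense in $Y$, takes the diagonal $\varDelta f_i:X\longrightarrow\prod_i Y_i$, and defines $X^k$ as the $b$-closure of the diagonal image; productivity and $b$-closed-heredity give $X^k\in{\bf K}$ directly, and the universal property is checked by projecting (with a corestriction step when $f(X)$ is not $b$-dense). You instead work inside the sobrification: after reprising the argument of Theorem \ref{th91} to get ${\bf Sob}\subseteq{\bf K}$ (a legitimate move, since parts (2) and (3) of that proof only use $b$-closed-heredity and productivity, which you are assuming), you take $X^k$ to be the least ${\bf K}$-subspace of $X^s$ containing $X$, prove membership in ${\bf K}$ by realising the intersection as a $b$-closed diagonal in a product (via the observation, which is the paper's Lemma \ref{th007}, that equalizers of maps into $T_0$ spaces are $b$-closed), and verify the universal property by pulling a given $f:X\to Z$ back along the sobrification. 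Each approach buys something: yours requires more machinery up front (${\bf Sob}\subseteq{\bf K}$, functoriality of sobrification, the pullback-as-equalizer argument for $B=\tilde f^{-1}(e_Z(Z))$), but it produces the explicit description $X^k=\bigcap\{A\in{\bf K}:X\subseteq A\subseteq X^s\}$ that the paper only recovers later as Theorem \ref{k3}(2), and it sidesteps a genuine set-theoretic wrinkle in the paper's construction --- $\Phi(X)$ as literally defined is a proper class and must be cut down to a representative set before the product $\prod_{i\in I}Y_i$ makes sense, whereas your index family is a set of subsets of $X^s$ from the outset. All the individual steps you flag (the restriction of the $b$-topology to subspaces, the homeomorphism between the diagonal and the intersection, and between the pullback and $B$) do check out.
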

\begin{proof}
It is well-known that if ${\bf K}$ is reflective, then it is productive (see V-6 in \cite{maclane}), and by Theorem \ref{th91}, it is $b$-closed-hereditary.

Now suppose ${\bf K}$ is productive and $b$-closed-hereditary. Let $X\in {\bf Top_0}$, and let 
$\Phi(X)$ be the family of all pairs $(Y,f)$, where 
$Y\in{\bf K}$ and $f:X\longrightarrow Y$ is a continuous mapping such that $f(X)$ is $b$-dense in $Y$. 
We may denote $\Phi(X)=\{(Y_i,f_i):i\in I\}$.
Let 
$X^k=\cl_b((\varDelta f_i)(X))$ be the $b$-closure of $(\varDelta f_i)(X)$ in the product $\prod_{i\in I}Y_i$, where $\varDelta f_i:X\longrightarrow\prod_{i\in I}Y_i$ is the diagonal.
Since ${\bf K}$ is productive, $\prod_{i\in I}Y_i\in{\bf K}$, and since {\bf K} is $b$-closed-hereditary, $X^k\in {\bf K}$.

 Next, we will prove that $X^k$ as the subspace of $\prod_{i\in I}Y_i$ with  the restriction $k:X\longrightarrow X^k$ of the diagonal $\varDelta f_i$ is the $\mathbf K$-reflection for $X$. To see this, suppose $Y\in {\bf K}$ and $f:X\longrightarrow Y$ is a continuous mapping. We consider the following cases:
 
 {\bf Case 1: }  $f(X)$ is $b$-dense in $Y$, that is, $(Y,f)\in\Phi(X)$, and we may assume $(Y,f)=(Y_j,f_j)$ for some $j\in I$. Let  $p_j:X^k\longrightarrow Y_j$ be the restriction of  the projection from $\prod_{i\in I}Y_i$ to $Y_j$. Then $p_j$ is continuous and it is clear that  $p_j\circ\varDelta f_i= f_j$:
 $$	\xymatrix@C=6em@R=6ex{
 	X\ar@{->}[r] ^{k}\ar@{->}[rd]_{f_j}&X^k\ar@{-->}[d]^{p_j}\\
 	&Y_j.\\}$$
The uniqueness of $p_j$ is immediate from \ref{lem1}.

 {\bf Case 2: } $f(X)$ is not $b$-dense in $Y$. Let $\cl_{b}(f(X))$ be the $b$-closure of  $f(X)$ in $Y$.
 Then the corestriction $\widehat{f}:X\longrightarrow \cl_{b}(f(X))$ ($\widehat{f}=f(x)$) of $f$ is a continuous mapping such that $\widehat{f}(X)$ is $b$-dense in $\cl_{b}(f(X))$. Hence, $(\cl_{b}(f(X)),\widehat{f})\in \Phi(X)$, and we may assume $(\cl_{b}(f(X)),\widehat{f})=(Y_j,f_j)$ for some $j\in I$. Then by case 1, $p_j$ is continuous and it is clear that  $p_j\circ\varDelta f_i= f_j$:
 $$	\xymatrix@C=6em@R=6ex{
	X\ar@{->}[r] ^{k}\ar@{->}[rd]_{f_j}&X^k\ar@{-->}[d]^{p_j}\\
	&Y_j.\\}$$
Note that $e\circ f_j=f$, where $e:\cl_{b}(f(X))\longrightarrow Y$ is the inclusion mapping. Then the composition $e\circ p_j:X^k\longrightarrow Y$ is continuous and $(e\circ p_j)\circ k=e\circ(p_j\circ k)=e\circ f_j=f$:
 $$	\xymatrix@C=4em@R=6ex{
	X\ar@{->}[ddr] _{f}\ar@{->}[rr] ^{k}\ar@{.>}[rd]^{f_j}&&X^k\ar@{.>}[dl]_{p_j}\ar@{->}[ddl]^{e\circ p_j}\\
	&Y_j\ar@{.>}[d] ^{e}&\\
&Y.&
}$$
The uniqueness of $e\circ p_j$ is immediate from \ref{lem1}.

All this shows that $k:X\longrightarrow X^k$ is the {\bf K}-reflection for $X$.
\end{proof}

\begin{definition}
	We say that a category  {\bf K}  \emph{has equalizers} if for any morphisms $f,g:X\longrightarrow Y$ in {\bf K}, the equalizer $E_{f,g}$ of $f$ and $g$ belongs to ${\bf K}$.
\end{definition}

\begin{remark}
	Let ${\bf K}$ be a subcategory of ${\bf Top_0}$ and $X,y\in{\bf K}$. If  $f,g:X\longrightarrow Y$ are continuous mappings, then the equalizer of $f$ and $g$ in ${\bf K}$ can be identified with the subspace $E_{f,g}=\{x\in X: f(x)=g(x)\}$ of $X$. 
\end{remark}

\begin{lemma}\label{th007}
	Let $X\in{\bf Top_0}$ and $E\subseteq X$. Then the following are equivalent:
\II
\I[$(1)$]  $E$ is $b$-closed in $X$;
\I[$(2)$]  there exist continuous mappings $f,g:X\longrightarrow (\Sigma 2)^M$ for some set $M$ such that  $E=\{x\in X: f(x)=g(x)\}$;
\I[$(3)$]  there exist continuous mappings $f,g:X\longrightarrow Y$ for some $Y\in{\bf Top_0}$ such that  $E=\{x\in X: f(x)=g(x)\}$.
\III	
\end{lemma}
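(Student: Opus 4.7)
My plan is to prove the cycle $(2)\Rightarrow(3)\Rightarrow(1)\Rightarrow(2)$. The step $(2)\Rightarrow(3)$ is immediate, since $(\Sigma 2)^M$ is a $T_0$ space.

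For $(3)\Rightarrow(1)$, the key observation is that the diagonal $\Delta_Y=\{(y,y):y\in Y\}$ of any $T_0$ space $Y$ is $b$-closed in $Y\times Y$: given $(y_1,y_2)\notin\Delta_Y$, the $T_0$ axiom furnishes an open $U$ of $Y$ containing exactly one of them, and then $U\times(Y\setminus U)$ is a locally closed (hence $b$-open) neighborhood of $(y_1,y_2)$ disjoint from $\Delta_Y$. Combined with the fact that a continuous map is automatically $b$-continuous (its inverse image preserves both open and closed sets, i.e.\ the subbasis of the $b$-topology), this shows that $E=\langle f,g\rangle^{-1}(\Delta_Y)$ is $b$-closed in $X$.

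The real content is $(1)\Rightarrow(2)$, where I will exhibit an appropriate pair of maps into a power of $\Sigma 2$. The $b$-topology on $X$ is generated by $\mathcal O(X)$ together with the family of closed subsets of $X$, so its basic open sets are the locally closed sets $U\cap F$ (the equivalence with the base $\{U\cap\cl(x):x\in U\in\mathcal O(X)\}$ used in the paper is immediate from $F=\bigcup_{x\in F}\cl(x)$ for a closed $F$ in a $T_0$ space). The elementary identity
\[
U\cap F \;=\; U\triangle\bigl(U\cap(X\setminus F)\bigr)
\]
realizes every locally closed set as the symmetric difference of two open sets. Dualizing, the $b$-closed set $E$ can be written as $E=\bigcap_{i\in I}\bigl(X\setminus(V_i\triangle W_i)\bigr)$ for some family of open sets $\{V_i,W_i\}_{i\in I}$, and pointwise this says $x\in E$ iff $\chi_{V_i}(x)=\chi_{W_i}(x)$ for every $i\in I$. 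Setting $M=I$ and defining $f,g:X\to(\Sigma 2)^M$ by $f(x)=(\chi_{V_i}(x))_{i\in I}$ and $g(x)=(\chi_{W_i}(x))_{i\in I}$ then yields two continuous maps whose equalizer is exactly $E$, as desired.

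The main potential obstacle is the basis manipulation in $(1)\Rightarrow(2)$, namely verifying that the paper's somewhat restrictive base for the $b$-topology really does produce all locally closed sets as unions of basic opens; once this equivalence is in hand, the construction of $f$ and $g$ is purely formal.
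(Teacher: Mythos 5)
Your proposal is correct. The direction $(1)\Rightarrow(2)$ is essentially the paper's own argument in a different parametrization: the paper writes $E=\bigcap_{i}\bigl(U_i\cup(X\setminus V_i)\bigr)$ using the closed base $\{U\cup(X\setminus V)\}$ of the $b$-topology and takes the pair $(\chi_{U_i},\chi_{U_i\cup V_i})$, whereas you write the complement of each basic $b$-open as $X\setminus(V_i\triangle W_i)$ and take $(\chi_{V_i},\chi_{W_i})$; since $U\cup(X\setminus V)=X\setminus\bigl(U\triangle(U\cup V)\bigr)$, these are the same construction, and your preliminary check that the paper's base $\{U\cap\cl(x):x\in U\}$ generates the same topology as the locally closed sets is a worthwhile point that the paper leaves implicit. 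Where you genuinely diverge is $(3)\Rightarrow(1)$: the paper argues pointwise, taking $x\notin E$, using $T_0$-ness of $Y$ to get $f(x)\nleq g(x)$, choosing $V$ with $f(x)\in V$, $g(x)\notin V$, and verifying $\da x\cap f^{-1}(V)\cap E=\emptyset$ via monotonicity of $g$; you instead observe that the diagonal $\Delta_Y$ is $b$-closed in $Y\times Y$ and that continuous maps are automatically $b$-continuous, so $E=\langle f,g\rangle^{-1}(\Delta_Y)$ is $b$-closed. Your route is more conceptual and reusable (it isolates the general facts that equalizers are preimages of diagonals and that $b$ is functorial), at the cost of invoking the product space; the paper's is more elementary and self-contained. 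One cosmetic remark: in your diagonal argument you should note that if the separating open set $U$ contains $y_2$ rather than $y_1$ you use $(Y\setminus U)\times U$ instead, but this is an immediate symmetry.
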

\begin{proof}
	(1) $\Rightarrow$ (2). Note that the family $\{U\cup (X\setminus V): U,V\in\mathcal O(X)\}$ forms a closed base for the $b$-topology. Since $E$ is $b$-closed, 
	$$E=\bigcap_{i\in M} U_i\cup (X\setminus V_i),$$
	where $U_i,V_i\in\mathcal O(X)$ for all $i\in M$. 
	Define $f,g:X\longrightarrow (\Sigma 2)^M$ by 
	$$f(x)(i)=\chi_{U_i}(x)\ \text{ and } g(x)(i)=\chi_{U_i\cup V_i}(x)$$
	for all $i\in M$. It is easy to check that  both $f$ and $g$ are continuous, and that $f(x)(i)=g(x)(i)$ iff $x\in U_i\cup (X\setminus V_i)$. It follows that  $E=\{x\in X:f(x)=g(x)\}$. 
	
	\medskip
	
	(2) $\Rightarrow$ (3). This is trivial.
	
	\medskip
	
	(3) $\Rightarrow$ (1). Let $x\notin E$. Then $f(x)\neq g(x)$. Since $Y$ is $T_0$, we may assume $f(x)\nleq g(x)$ without loss of generality. Then there exists $V\in\mathcal O(Y)$ such that $f(x)\in V$ and $g(x)\notin V$. It follows that $x\in f^{-1}(V)$ and $x\notin g^{-1}(V)$.  We claim that $\da x\cap f^{-1}(V)\cap E=\emptyset$. Otherwise, if $y\in \da x\cap f^{-1}(V)\cap E$, then $g(y)=f(y)\in V$ and $g(y)\leq g(x)$, and hence $g(x)\in V$, a contradiction.
\end{proof}

\begin{proposition}\label{th212}
	Let ${\bf K}$ be a subcategory of ${\bf Top_0}$. 
	If  $\{(\Sigma 2)^M:M\text{ is a set}\}\subseteq {\bf K}$, then the following are equivalent:
\II
\I[$(1)$]  ${\bf K}$ has equalizers;
\I[$(2)$]  ${\bf K}$ is $b$-closed-hereditary.
\III
\end{proposition}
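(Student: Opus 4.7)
The proposition is essentially an immediate corollary of the preceding Lemma \ref{th007}, and the plan is to feed that lemma into both directions of the equivalence, using the hypothesis $\{(\Sigma 2)^M : M \text{ a set}\} \subseteq {\bf K}$ only in the forward direction.

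For $(1) \Rightarrow (2)$, I would take $X \in {\bf K}$ and a $b$-closed subspace $E$ of $X$, and aim to exhibit $E$ as an equalizer of two morphisms in ${\bf K}$. Applying the implication $(1) \Rightarrow (2)$ of Lemma \ref{th007} to $E$ produces continuous mappings $f, g : X \longrightarrow (\Sigma 2)^M$ for some set $M$ with $E = \{x \in X : f(x) = g(x)\}$. Here the assumption that $(\Sigma 2)^M \in {\bf K}$ is crucial: it guarantees that $f$ and $g$ are genuinely morphisms in ${\bf K}$ (since both domain and codomain lie in ${\bf K}$), so that the definition of ``${\bf K}$ has equalizers'' applies and yields $E = E_{f,g} \in {\bf K}$.

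For $(2) \Rightarrow (1)$, I would start with any parallel pair of morphisms $f, g : X \longrightarrow Y$ in ${\bf K}$ and show the equalizer $E_{f,g} = \{x \in X : f(x) = g(x)\}$ lies in ${\bf K}$. The implication $(3) \Rightarrow (1)$ of Lemma \ref{th007} immediately tells us that $E_{f,g}$ is a $b$-closed subspace of $X$. Since $X \in {\bf K}$ and ${\bf K}$ is $b$-closed-hereditary, we conclude $E_{f,g} \in {\bf K}$, as required.

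There is no real obstacle here; the only subtlety to mention explicitly is that the hypothesis $\{(\Sigma 2)^M\} \subseteq {\bf K}$ is used only to ensure that the pair $(f,g)$ produced by Lemma \ref{th007} consists of morphisms inside ${\bf K}$ (so that the equalizer hypothesis can be invoked), whereas the reverse implication uses no size hypothesis at all. Thus the whole proof is just a translation between ``$b$-closed subset of $X$'' and ``equalizer of two maps out of $X$'' via Lemma \ref{th007}.
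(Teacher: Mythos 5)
Your proof is correct and is exactly the intended argument: the paper states Proposition \ref{th212} without proof, treating it as an immediate consequence of Lemma \ref{th007}, and your two directions (using $(1)\Rightarrow(2)$ of that lemma together with fullness of ${\bf K}$ and the hypothesis $(\Sigma 2)^M\in{\bf K}$ for one implication, and $(3)\Rightarrow(1)$ together with $b$-closed-heredity for the other) are precisely the translation the authors have in mind. Your remark about where the hypothesis $\{(\Sigma 2)^M\}\subseteq{\bf K}$ is actually used is accurate and a worthwhile clarification.
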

As an immediate result of Theorem \ref{th100} and Proposition \ref{th212}, we have the following  Theorem.
\begin{theorem}[\cite{nel}]\label{th213}
	Let ${\bf K}$	be a reflective subcategory of ${\bf Top_0}$ such that ${\bf K}\nsubseteq {\bf Top_1}$. Then ${\bf K}$ is reflective if and only if it is productive and has equalizers.
\end{theorem}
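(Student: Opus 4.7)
The plan is to derive Theorem \ref{th213} by combining Theorem \ref{th100} (reflective $\Leftrightarrow$ productive and $b$-closed-hereditary) with Proposition \ref{th212} (under the hypothesis $(\Sigma 2)^M \in {\bf K}$ for all $M$, has equalizers $\Leftrightarrow$ $b$-closed-hereditary).

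The forward direction is straightforward: if ${\bf K}$ is reflective, then it is productive by general category theory, $b$-closed-hereditary by Theorem \ref{th100}, and contains every power $(\Sigma 2)^M$ by Theorem \ref{th91}(2); Proposition \ref{th212} then converts $b$-closed-hereditary into ``has equalizers''.

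For the converse, Theorem \ref{th100} reduces reflectivity to productivity plus $b$-closed-hereditary, and Proposition \ref{th212} would yield $b$-closed-hereditary from ``has equalizers'' as soon as $(\Sigma 2)^M \in {\bf K}$ is known. So the only real work is to establish $\Sigma 2 \in {\bf K}$; productivity then supplies all the powers. This is the main obstacle, since Proposition \ref{th212} is not yet in force. I would resolve it by using a non-$T_1$ object $X_0 \in {\bf K}$, guaranteed by ${\bf K} \nsubseteq {\bf Top_1}$, and fixing a pair $x_1 < x_2$ in $X_0$; by Lemma \ref{lem93} the two-point subspace $A = \{x_1, x_2\}$ is $b$-closed in $X_0$ and homeomorphic to $\Sigma 2$, so the task becomes to realize $A$ as an equalizer inside ${\bf K}$ using $X_0$ itself as a surrogate Sierpi\'nski space.

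To do so, write $A = \bigcap_{i\in M}\bigl(U_i \cup (X_0 \setminus V_i)\bigr)$ with $U_i, V_i \in \mathcal O(X_0)$, as in the proof of Lemma \ref{th007}. For each $i$, define $\tilde f_i, \tilde g_i : X_0 \to X_0$ by sending $x$ to $x_2$ when $x \in U_i$ (respectively $x \in U_i \cup V_i$) and to $x_1$ otherwise. Since $x_1 < x_2$ forces every open set of $X_0$ containing $x_1$ to contain $x_2$ as well, a short preimage check establishes continuity of both maps, and a direct computation shows their equalizer is exactly $U_i \cup (X_0 \setminus V_i)$. Taking diagonals produces continuous $\tilde F, \tilde G : X_0 \to X_0^M$ whose equalizer is $A$; since $X_0^M \in {\bf K}$ by productivity and ${\bf K}$ has equalizers, $A \in {\bf K}$, hence $\Sigma 2 \in {\bf K}$ by (K2). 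Productivity then promotes this to $(\Sigma 2)^M \in {\bf K}$ for every set $M$, Proposition \ref{th212} supplies $b$-closed-hereditary, and Theorem \ref{th100} concludes reflectivity.
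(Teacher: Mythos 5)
Your proposal is correct, and it is actually more complete than the paper's own treatment. The paper derives Theorem~\ref{th213} in one line, as ``an immediate result of Theorem~\ref{th100} and Proposition~\ref{th212}.'' That derivation is clean in the forward direction (reflective $\Rightarrow$ productive, $b$-closed-hereditary, and $(\Sigma 2)^M\in{\bf K}$ by Theorem~\ref{th91}, whence Proposition~\ref{th212} yields equalizers), but in the converse direction Proposition~\ref{th212} cannot be invoked until one knows $(\Sigma 2)^M\in{\bf K}$, and Theorem~\ref{th91}(2) is not available there because it presupposes reflectivity. You correctly isolate this as the only real obstacle and close it: taking a non-$T_1$ object $X_0\in{\bf K}$ with $x_1<x_2$, writing the $b$-closed set $\{x_1,x_2\}$ as $\bigcap_{i\in M}\bigl(U_i\cup(X_0\setminus V_i)\bigr)$, and realizing it as the equalizer of the diagonals of the maps $\tilde f_i,\tilde g_i:X_0\to X_0$ (continuity holds precisely because every open set containing $x_1$ contains $x_2$, and the equalizer of the $i$-th pair is $U_i\cup(X_0\setminus(U_i\cup V_i))=U_i\cup(X_0\setminus V_i)$). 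Since $X_0^M\in{\bf K}$ by productivity and ${\bf K}$ has equalizers, this gives $\Sigma 2\in{\bf K}$ via Lemma~\ref{lem93} and (K2), after which productivity, Proposition~\ref{th212} and Theorem~\ref{th100} finish the argument exactly as in the paper. So the high-level route is the same, but your equalizer construction inside $X_0^M$ supplies a step the paper glosses over; the only cosmetic remark is that the theorem's hypothesis should read ``a subcategory'' rather than ``a reflective subcategory,'' as both you and the paper implicitly assume.
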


\begin{theorem}\label{k3}
Let ${\bf K}$	be a reflective subcategory of ${\bf Top_0}$ such that ${\bf K}\nsubseteq {\bf Top_1}$. If $\{X_i:i\in I\}\subseteq{\bf K}$ is a family of subspaces of a sober space $Z$, then
\II
\I[$(1)$] the subspace $\bigcap_{i\in I}X_i\in{\bf K}$;
\I[$(2)$]  for each subspace $X$ of  $Z$,  the inclusion mapping $e_X:X\longrightarrow\cl_{k}(X)$ is a ${\bf K}$-reflection for $X$, where $\cl_{k}(X)=\bigcap\{A\in{\bf K}:X\subseteq A\subseteq Z\}$.
\III
\end{theorem}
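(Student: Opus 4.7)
The plan is to prove (1) first and then derive (2) from it.

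For (1), since ${\bf K}$ is reflective it is productive and $b$-closed-hereditary by Theorem \ref{th100}. Consider the product $W = \prod_{i\in I} X_i \in {\bf K}$ together with the diagonal map $\phi: \bigcap_{i\in I} X_i \to W$, $z \mapsto (z)_{i\in I}$, which is a topological embedding. Fixing any $i_0 \in I$, its image $\phi(\bigcap_{i\in I} X_i)$ coincides with the equalizer in $W$ of the inclusion $W \hookrightarrow Z^I$ and the continuous map $W \to Z^I$ sending $w$ to the constant tuple $(p_{i_0}(w))_{i\in I}$. By Lemma \ref{th007} this equalizer is $b$-closed in $W$, and Theorem \ref{th91}(1) then yields $\phi(\bigcap_{i\in I} X_i) \in {\bf K}$, whence $\bigcap_{i\in I} X_i \in {\bf K}$.

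For (2), since $Z$ is sober we have $Z \in {\bf K}$ by Theorem \ref{th91}(3), so the family defining $\cl_k(X)$ is non-empty and (1) gives $\cl_k(X) \in {\bf K}$. I next verify $b$-density of $X$ in $\cl_k(X)$: any $b$-closed $B \subseteq \cl_k(X)$ containing $X$ satisfies $B \in {\bf K}$ by Theorem \ref{th91}(1), and since $X \subseteq B \subseteq Z$, minimality of $\cl_k(X)$ forces $\cl_k(X) \subseteq B$, hence $B = \cl_k(X)$. Uniqueness of the universal extension then follows from Lemma \ref{lem1}(1).

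The main obstacle is the existence of the extension, and my plan is to identify $\cl_k(X)$ with the abstract ${\bf K}$-reflection $X^k$ of $X$. Let $\mu_X: X \to X^k$ be the reflection, a $b$-dense embedding by Theorem \ref{th101}. Universality applied to the inclusion $X \hookrightarrow Z$ produces $\tilde\iota: X^k \to Z$ with $\tilde\iota \circ \mu_X = e_X$, and Lemma \ref{lem1}(2) gives $\tilde\iota(X^k) \subseteq \cl_b(X) =: X^s$, which is sober. Composing $\mu_X$ with the sober reflection $\sigma: X^k \to (X^k)^s$ produces a composition of $b$-dense embeddings $X \to (X^k)^s$ into a sober space, hence a sober reflection of $X$ by Theorem \ref{th12}; Remark \ref{rem2}(1) then supplies a canonical homeomorphism $(X^k)^s \cong X^s$ under which $\tilde\iota$ identifies with the composite embedding $X^k \xrightarrow{\sigma} X^s \hookrightarrow Z$ (using Lemma \ref{lem1}(1) to compare the two candidates), so $\tilde\iota$ is an embedding. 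The equality $\tilde\iota(X^k) = \cl_k(X)$ follows by two-sided inclusion: the left side is a ${\bf K}$-subspace of $Z$ containing $X$, so $\cl_k(X) \subseteq \tilde\iota(X^k)$; conversely, for any $A \in {\bf K}$ with $X \subseteq A \subseteq Z$, the reflection extension $X^k \to A$ composed with $A \hookrightarrow Z$ must equal $\tilde\iota$ by Lemma \ref{lem1}(1), so $\tilde\iota(X^k) \subseteq A$ and therefore $\tilde\iota(X^k) \subseteq \cl_k(X)$. The resulting homeomorphism $X^k \cong \cl_k(X)$ transports $\mu_X$ to $e_X$, and Remark \ref{rem2}(2) then concludes that $e_X$ is a ${\bf K}$-reflection for $X$. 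The subtlest step is checking that a composition of two $b$-dense embeddings remains a $b$-dense embedding, which reduces to a short closure computation in the ambient sober space.
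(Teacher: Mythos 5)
Your proposal is correct, and your part (2) follows essentially the paper's own route: extend the inclusion of $X$ into the ambient sober space to the abstract reflection $X^k$, identify that extension with a sober reflection of $X^k$ (via Theorems \ref{th12} and \ref{th101} and Remark \ref{rem2}) to see it is a $b$-dense embedding, and pin its image down to $\cl_{k}(X)$ by universality together with the uniqueness statement of Lemma \ref{lem1}. Part (1), however, takes a genuinely different route. The paper sets $X=\bigcap_{i\in I}X_i$, maps its ${\bf K}$-reflection $X^k$ into the $b$-closure $X^s$ of $X$ in $Z$, shows the image lands in every $X_i$ and hence equals $X$, and concludes via Proposition \ref{prop1} that $X$ is a $b$-retract of, hence homeomorphic to, $X^k$. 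You instead realize $\bigcap_{i\in I}X_i$ as the equalizer of two continuous maps from $\prod_{i\in I}X_i$ to $Z^I$, which by Lemma \ref{th007} is $b$-closed in the product, and then invoke productivity and $b$-closed-hereditariness (Theorem \ref{th100}, or Theorem \ref{th91}(1) plus closure of reflective subcategories under products). Your argument is shorter, reflects the categorical slogan that intersections are limits, and in fact proves slightly more: it never uses sobriety of $Z$, so closure under intersections of subspaces holds inside any ambient $T_0$ space once ${\bf K}$ is productive and $b$-closed-hereditary; the paper's retraction argument works directly from the universal property without passing through the product. Both are complete; the only small points worth making explicit in yours are that the product of the subspace topologies on the $X_i$ agrees with the subspace topology induced from $Z^I$, and that the composite of two $b$-dense embeddings is again a $b$-dense embedding (a fact the paper's Step 2 also uses without proof).
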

\begin{proof}
(1) Let $X=\bigcap_{i\in I}X_i$. Then the inclusion mapping $e:X\longrightarrow X^s=\cl_{b}(X)$, $x\mapsto x$, is a sober reflection for $X$. Note that $X=\bigcap_{i\in I}X_i\cap X^s$, we may assume all $X_i\subseteq X^s$ without loss of generality.
Assume $\mu_{X}:X\longrightarrow X^k$ is a {\bf K}-reflection for $X$.
By Theorem \ref{th91}, $X^s\in {\bf K}$, and thus there exists a unique continuous mapping $f:X^k\longrightarrow X^s$ such that $f\circ\mu_{X}=e$:
$$	\xymatrix@C=6em@R=6ex{
	X\ar@{->}[r] ^{\mu_X}\ar@{->}[rd]_{e}&X^k\ar@{-->}[d]^{f}\\
	&X^s.\\}$$

Claim :  $f(X^k)=X$.

It suffices to prove $f(X^k)\subseteq X_i$ for all $i\in I$.
Note that $X_i\in{\bf K}$, and let $e^i:X\longrightarrow X_i$ and $e_i:X_i\longrightarrow X^s$ be the inclusion mappings. It is clear that $e=e_i\circ e^i$: 
$$	\xymatrix@C=6em@R=6ex{
	X\ar@{->}[r] ^{e^i}\ar@{->}[rd]_{e}&X_i\ar@{->}[d]^{e_i}\\
	&X^s.\\}$$
Then there exists $f_i:X^k\longrightarrow X_i$ such that $f_i\circ \mu_{X}=e^i$:
$$	\xymatrix@C=6em@R=6ex{
	X\ar@{->}[r] ^{\mu_X}\ar@{->}[rd]_{e^i}&X^k\ar@{-->}[d]^{f_i}\\
	&X_i.\\}$$
Now $e_i\circ f_i:X^k\longrightarrow X^s$ is a continuous mapping such that  $(e_i\circ f_i)\circ\mu_{X}=e_i\circ(f_i\circ \mu_{X})=e_i\circ e^i=e=f\circ\mu_{X}$, and then by the uniqueness of $f$, we have that $e_i\circ f_i=f$:
$$	\xymatrix@C=6em@R=6ex{
	X\ar@{->}[r] ^{\mu_X}\ar@{->}[rd]_{e^i}&X^k\ar@{->}[d]^{f_i}\ar@{->}[rd]^{f=e_i\circ f_i}&\\
	&X_i\ar@{->}[r]_{e_i}&X^s.\\
}$$
For each $y\in X^k$, we have $f(y)=e_i(f_i(y))=f_i(y)\in X_i$, hence $f(X^k)\subseteq X_i$. Therefore, $f(X^k)\subseteq\bigcap_{i\in I}X_i=X$, and the reverse inclusion is trivial, showing the claim.

\medskip

Consider the co-restriction $\widehat{f}:X^k\longrightarrow f(X^k)=X$ of $f$, which is a continuous mapping such that $\widehat{f}\circ\mu_{X}={\rm id}_X$, the identity mapping on $X$. This implies $X$ is a $b$-retract of $X^k$, and by Proposition \ref{prop1},  $X$ is homeomorphic to $X^k\in{\bf K}$. Therefore, $X\in{\bf K}$.

\medskip

(2)  Let $X^s$ is the $b$-closure of $X$ in $Z$. Then the inclusion mapping $e:X\longrightarrow X^s$ is a sober reflection for $X$.
Next, we prove the conclusion in some steps.

 Step 1. \
Suppose that $\mu_{X}:X\longrightarrow X^k$ is the {\bf K}-reflection for $X$. Since $X^s\in{\bf Sob}\subseteq {\bf K}$ by Theorem \ref{th91},  there exists a unique continuous mapping $f:X^k\longrightarrow X^s$ such that $f\circ\mu_{X}=e:$
$$	\xymatrix@C=6em@R=6ex{
	X\ar@{->}[r] ^{\mu_X}\ar@{->}[rd]_{e}&X^k\ar@{-->}[d]^{f}\\
	&X^s.\\}$$
Using a similar proof of the Claim in (1), one can prove that $$f(X^k)\subseteq \bigcap\{A\in{\bf K}:X\subseteq A\subseteq X^s\}=\bigcap\{A\in{\bf K}:X\subseteq A\subseteq Z\}=\cl_{k}(X).$$

Step 2.\
Suppose that $\eta_{X^k}:X^k\longrightarrow Y$ is a sober reflection for $X^k$. By Theorems \ref{th12} and  \ref{th101}, both $\mu_{X}$ and $\eta_{X^k}$ are $b$-dense embedding, so is their composition $\eta_{X^k}\circ\mu_{X}:X\longrightarrow Y$, which is a sober reflection for $X$ by Theorem \ref{th12}.
Then there exists a unique homoemorphism $h:Y\longrightarrow X^s$ such that $h\circ\eta_{X^k}\circ\mu_{X}=e$. We have $(h\circ \eta_{X^k})\circ\mu_X=e=f\circ\mu_{X}$, which implies $f=h\circ\eta_{X^k}$ by the uniqueness of $f$, i.e., the following diagram commutes:
$$	\xymatrix@C=6em@R=6ex{
	X\ar@{->}[r] ^{\mu_X}\ar@{->}[rd]_{e}&X^k\ar@{->}[d]^{f}\ar@{->}[r]^{\eta_{X^k}}&Y\ar@{-->}[dl]^{h}\\
	&X^s.\\}$$
Since $h$ is a homeomorphism and $\eta_{X^k}$ is a $b$-dense embedding, we deduce that $f=h\circ\eta_{X^k}$ is also a $b$-dense embedding (which is actually a sober reflection for $X^k$). It follows that $f(X^k)$, as a subspace of $X^s$,  is homeomorphic to $X^k\in {\bf K}$, and hence $f(X^k)\in {\bf K}$. Since $X\subseteq f(X^k)\subseteq X^s$, we have that $\bigcap\{A\in{\bf K}:X\subseteq A\subseteq Z\}\subseteq f(X^k)$.
By Step 1, we have that $$f(X^k)=\bigcap\{A\in{\bf K}:X\subseteq A\subseteq Z\}=\cl_{k}(X),$$
which implies the co-restriction $\widehat{f}:X^k\longrightarrow \cl_k(X)$ of $f$ is a homeomorphism such that $\widehat{f}\circ \mu_{X}=e$:
$$	\xymatrix@C=6em@R=6ex{
	X\ar@{->}[r] ^{\mu_X}\ar@{->}[rd]_{e}&X^k\ar@{->}[d]^{\widehat{f}\text{ (a homeomorphism)}}\\
	&\cl_{k}(X).\\}$$
 Therefore, $e:X\longrightarrow \cl_{k}(X)$ is  also a {\bf K}-reflection for $X$.
\end{proof}

\begin{theorem}\label{k4}
Let ${\bf K}$	be a reflective subcategory of ${\bf Top_0}$ such that ${\bf K}\nsubseteq {\bf Top_1}$. If $f:X\longrightarrow Y$ is a continuous mapping from a sober space $X$ to a sober space $Y$, then for any subspace $Y_1$ of $Y$, $Y_1\in{\bf K}$ implies that $f^{-1}(Y_1)\in {\bf K}$.
\end{theorem}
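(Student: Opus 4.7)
The plan is to realize $f^{-1}(Y_1)$ as the intersection of two subspaces of a sober space, both of which lie in $\mathbf{K}$, and then invoke Theorem \ref{k3}(1). The natural ambient sober space is the product $X\times Y$, which is sober since a product of sober spaces is sober.

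First, I would consider the graph map $g:X\longrightarrow X\times Y$ defined by $g(x)=(x,f(x))$. It is continuous (both components are), and the first projection $\pi_1:X\times Y\longrightarrow X$ restricts to a continuous inverse of $g:X\longrightarrow g(X)$, so $g$ is a topological embedding. Therefore the subspace $g(X)\subseteq X\times Y$ is homeomorphic to $X$, hence sober, hence lies in $\mathbf{K}$ by Theorem \ref{th91}(3).

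Next, I would observe that $X\times Y_1\in\mathbf{K}$: indeed, $X$ is sober so $X\in\mathbf{K}$, $Y_1\in\mathbf{K}$ by hypothesis, and $\mathbf{K}$ is productive because it is reflective. Since the subspace topology on $X\times Y_1$ inherited from $X\times Y$ agrees with its product topology, $X\times Y_1$ is also a subspace of the sober space $X\times Y$ that belongs to $\mathbf{K}$. A direct computation gives
\[
g(f^{-1}(Y_1))=\{(x,f(x)):f(x)\in Y_1\}=g(X)\cap (X\times Y_1),
\]
so $g$ restricts to a homeomorphism between $f^{-1}(Y_1)$ and $g(X)\cap (X\times Y_1)$.

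Finally, applying Theorem \ref{k3}(1) to the two subspaces $g(X)$ and $X\times Y_1$ of the sober space $X\times Y$, both in $\mathbf{K}$, yields $g(X)\cap (X\times Y_1)\in\mathbf{K}$, and condition (K2) transfers membership across the homeomorphism to give $f^{-1}(Y_1)\in\mathbf{K}$. The only mildly subtle step is recognizing that the right ambient sober space is $X\times Y$ rather than $X$ itself; once the graph trick identifies $f^{-1}(Y_1)$ as an intersection of $\mathbf{K}$-subspaces inside a sober space, Theorem \ref{k3}(1) does all the work.
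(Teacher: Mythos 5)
Your proof is correct, but it takes a genuinely different route from the paper's. The paper works inside $X$ itself: it invokes Theorem \ref{k3}(2) to identify the $\mathbf{K}$-reflection of $X_1=f^{-1}(Y_1)$ with the inclusion into $(X_1)^k=\bigcap\{K\in\mathbf{K}:X_1\subseteq K\subseteq X\}$, and then runs a uniqueness/diagram-chase argument with the universal property (factoring both $f_1:X_1\to Y_1$ and $e_Y\circ f_1:X_1\to Y$ through $(X_1)^k$) to conclude $(X_1)^k\subseteq X_1$, hence $X_1=(X_1)^k\in\mathbf{K}$. You instead pass to the ambient sober space $X\times Y$ and use the graph embedding to write $f^{-1}(Y_1)$, up to homeomorphism, as $g(X)\cap(X\times Y_1)$, an intersection of two $\mathbf{K}$-subspaces, so that Theorem \ref{k3}(1) finishes the job. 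Every step checks out: the graph map is an embedding without any separation hypotheses (the first projection furnishes the continuous inverse), $g(X)\in\mathbf{K}$ by Theorem \ref{th91}(3) and (K2), $X\times Y_1\in\mathbf{K}$ by productivity and the compatibility of product and subspace topologies, and the identification $g(f^{-1}(Y_1))=g(X)\cap(X\times Y_1)$ is a direct computation. Your argument has the conceptual advantage of exhibiting (K4) as a formal consequence of (K1)--(K3) plus closure under binary products, with no further appeal to the universal property of the reflection; its only extra input is the standard fact that a product of sober spaces is sober, which the paper uses implicitly (for $(\Sigma 2)^M$ in Remark \ref{rem90}) but never states, so you should cite or prove it explicitly.
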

\begin{proof}
Let $X_1=f^{-1}(Y)$ and $(X_1)^k=\bigcap\{K\in {\bf K}: X_1\subseteq K\subseteq X\}$ be the subspace of $X$. By Theorem \ref{k3}, the inclusion mapping $e_1:X_1\longrightarrow (X_1)^k$ is a {\bf K}-reflection for $X_1$. Consider the restriction $f_1:X_1\longrightarrow Y_1$ ($x\mapsto f(x)$) of $f$, then there exists a unique continuous mapping $g_1:(X_1)^k\longrightarrow Y_1$ such that $g_1\circ e_1=f_1$:
$$	\xymatrix@C=6em@R=6ex{
X_1\ar@{->}[r] ^{e_1}\ar@{->}[rd]_{f_1}&(X_1)^k\ar@{-->}[d]^{g_{1}}\\
	&Y_1.\\}$$

Consider the composition $e_Y\circ f_1:X_1\longrightarrow Y$, where  $e_{Y}:Y_1\longrightarrow Y$ is the inclusion mapping. There exists a unique continuous mapping $g_2:(X_1)^k\longrightarrow Y$ such that $g_2\circ e_1=e_Y\circ f_1$:
$$	\xymatrix@C=6em@R=6ex{
	X_1\ar@{->}[r] ^{e_1}\ar@{->}[rd]_{f_1}&(X_1)^k\ar@{-->}[rd]^{g_{2}}&\\
	&Y_1\ar@{->}[r]^{e_{Y}}&Y.\\}$$

Let $f_2:(X_1)^k\longrightarrow Y$ ($x\mapsto f(x)$) be the restriction of $f$. On the one hand, for each $x\in X_1$, we have $(f_2\circ e_1)(x)=f(x)=(e_Y\circ f_1)(x)=(g_2\circ e_1)(x)$, it follows that $f_2\circ e_1=g_2\circ e_1$, which implies $g_2=f_2$ by the uniqueness of $g_2$. On the other hand, $g_2\circ e_1=e_Y\circ f_1=e_Y\circ (g_1\circ e_1)=(e_Y\circ g_1)\circ e_1$, which implies that $e_Y\circ g_1=g_2=f_2$ by the uniqueness of $g_2$, i.e., the following diagram commutes:
$$	\xymatrix@C=6em@R=6ex{
	X_1\ar@{->}[r] ^{e_1}\ar@{->}[rd]_{f_1}&(X_1)^k\ar@{->}[d]^{g_{1}}\ar@{->}[rd]^{g_{2}=f_2}&\\
	&Y_1\ar@{->}[r]^{e_{Y}}&Y.\\}$$

 Then for each $x\in (X_1)^k$, we have $f(x)=f_2(x)=g_2(x)=(e_Y\circ g_1)(x)=g_1(x)\in Y_1$, which implies $x\in f^{-1}(Y_1)=X_1$. Hence, $(X_1)^k\subseteq X_1$, and so $X_1=(X_1)^k\in{\bf K}$.
\end{proof}

Using Theorems \ref{th91}, \ref{k3} and \ref{k4}, we obtain the main result in this paper.
\begin{theorem}\label{th011}
	Let ${\bf K}$ be a subcategory of ${\bf Top_0}$ such that ${\bf K}\nsubseteq{\bf Top_1}$. Then 	${\bf K}$ is reflective in ${\bf Top_0}$	
if and only if 	 ${\bf K}$ satisfies the conditions (K1)--(K4).
\end{theorem}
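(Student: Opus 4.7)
The plan is to split the equivalence into its two directions and observe that essentially all the work has already been done in the preceding theorems, so that the final proof is a bookkeeping exercise.

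For the direction (K1)--(K4) $\Rightarrow$ reflective, I would simply cite the Keimel--Lawson result recalled in the introduction: they proved that the four conditions are sufficient for reflectivity in ${\bf Top_0}$. Note that hypothesis ${\bf K}\nsubseteq{\bf Top_1}$ is not needed for this direction.

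For the converse, suppose ${\bf K}$ is reflective with ${\bf K}\nsubseteq{\bf Top_1}$. Condition (K2) is part of our standing assumption on subcategories. Condition (K1) is exactly the statement ${\bf Sob}\subseteq{\bf K}$, which is Theorem \ref{th91}(3). Condition (K3) is the statement that for any family $\{X_i:i\in I\}\subseteq{\bf K}$ of subspaces of a sober space, $\bigcap_{i\in I}X_i\in{\bf K}$, which is exactly Theorem \ref{k3}(1). Finally, condition (K4) is exactly the content of Theorem \ref{k4}.

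Since every condition is now covered by an earlier theorem, the proof consists of citing Theorems \ref{th91}, \ref{k3}, and \ref{k4} for each of (K1), (K3), (K4) respectively, and invoking Keimel--Lawson for the reverse implication. There is no genuine obstacle at this stage; the real work was done in establishing Theorem \ref{th91} (which itself relied on the $b$-dense embedding result Theorem \ref{th101} and the product embedding into $(\Sigma 2)^M$), and in Theorems \ref{k3} and \ref{k4}, whose proofs used $b$-retraction arguments via Proposition \ref{prop1} together with the universal property of the ${\bf K}$-reflection. Thus the final theorem is best presented as a synthesis rather than a new computation.
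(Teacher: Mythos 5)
Your proposal is correct and matches the paper exactly: the paper gives no separate argument for this theorem beyond the sentence ``Using Theorems \ref{th91}, \ref{k3} and \ref{k4}, we obtain the main result,'' with the sufficiency direction being the Keimel--Lawson criterion quoted in the introduction and (K2) covered by the standing convention on subcategories. Your observation that ${\bf K}\nsubseteq{\bf Top_1}$ is only needed for the necessity direction is also accurate (indeed (K1) forces it, since $\Sigma 2$ is sober but not $T_1$).
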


As an immediate result of Theorems \ref{th100}, \ref{th213} and \ref{th011}, the characterizations for the reflectivity of ${\bf K}$  can be summarized as follows.
\begin{corollary}\label{th00}
	Let ${\bf K}$ be a subcategory of ${\bf Top_0}$ such that ${\bf K}\nsubseteq{\bf Top_1}$. Then the following statements are equivalent:
\II	
\I[$(1)$]	${\bf K}$ is reflective in ${\bf Top_0}$;	
\I[$(2)$]	 ${\bf K}$ satisfies conditions (K1)--(K4);	
\I[$(3)$]	 ${\bf K}$ is productive and $b$-closed-hereditary;
\I[$(4)$]	${\bf K}$ is productive and has equalizers.
\III
\end{corollary}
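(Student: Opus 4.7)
The plan is that this corollary is essentially a packaging of three previously proved equivalences, so the proof reduces to invoking them in the right order. Explicitly, Theorem \ref{th011} supplies the equivalence $(1)\Leftrightarrow (2)$, Theorem \ref{th100} supplies $(1)\Leftrightarrow (3)$, and Theorem \ref{th213} supplies $(1)\Leftrightarrow (4)$. So I would simply chain: $(2)\Leftrightarrow (1)\Leftrightarrow (3)\Leftrightarrow (1)\Leftrightarrow (4)$, making each step a single citation to the appropriate theorem. Because every one of these reference theorems carries the same hypothesis \textbf{K}\nsubseteq \textbf{Top_1}, that hypothesis transfers cleanly into the corollary's statement with no extra work; the standing assumption (K2) (subcategories are closed under homeomorphism) is also already in place throughout the paper and is tacitly used in all three invoked theorems.

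The only thing worth checking carefully is that no circularity arises among Theorems \ref{th100}, \ref{th213}, \ref{th011}. Scanning their dependencies: Theorem \ref{th100} is proved from Theorem \ref{th91} plus a direct construction with the product $\prod_{i\in I}Y_i$ and its $b$-closure; Theorem \ref{th213} is derived from Theorem \ref{th100} together with Proposition \ref{th212}, which itself rests on Lemma \ref{th007}; and Theorem \ref{th011} is obtained from Theorems \ref{th91}, \ref{k3} and \ref{k4}. None of these three appeals to the others as a blackbox at the level of the final packaging, so the cyclic chain of implications is in fact a sound linear derivation of the four-way equivalence.

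I do not anticipate any real obstacle: the substantive analytic work has been absorbed into Theorems \ref{th100}, \ref{th213}, and \ref{th011}, and the proof of Corollary \ref{th00} is then only a bookkeeping statement combining them. The write-up should therefore be a single short paragraph that records the three cited equivalences and notes that together they give $(1)\Leftrightarrow (2)\Leftrightarrow (3)\Leftrightarrow (4)$.
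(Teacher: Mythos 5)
Your proposal matches the paper exactly: the corollary is stated there as an immediate consequence of Theorems \ref{th100}, \ref{th213} and \ref{th011}, which give $(1)\Leftrightarrow(3)$, $(1)\Leftrightarrow(4)$ and $(1)\Leftrightarrow(2)$ respectively, and your dependency check confirming no circularity is a correct (if unstated in the paper) sanity check.
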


\section{Some applications}

By using the results in the last section, we now investigate  the reflectivity  of the categories of co-sober spaces, strong $d$-spaces,
$k$-bounded sober spaces, and open well-filtered spaces.
Note that all these classes of spaces  are closed under the formation of homeomorphic objects.

\subsection{Co-sober spaces}
In \cite{defcosob}, to study the dual Hofmann-Mislove Theorem, Escard\'{o},
Lawson and Simpson introduced the co-sober spaces  \cite{defcosob}, which are defined below.
\begin{definition}[\cite{defcosob}] Let $X$ be a $T_0$ space. and $Q$ be a compact saturated subset of $X$.
\II	
\I[(1)] $Q$ is called \emph{$k$-irreducible} if for any compact saturated subsets $Q_1,Q_2$ of $X$, $Q=Q_1\cup Q_2$ implies $Q=Q_1$ or $Q=Q_2$.
\I[(2)] $X$ is called \emph{co-sober} if for each $k$-irreducible set $Q$, there exists a unique $x\in X$ such that  $Q=\ua x$.
\III
The category of all co-sober spaces with continuous mappings is denoted by ${\bf Co}$-${\bf Sob}$. Note that   ${\bf Co}$-${\bf Sob}$ is a subcategory of ${\bf Top_0}$.
\end{definition}

 \begin{example}\label{exa}
Let  $\mathbb N$ be the space of all natural numbers with the Alexandorff topology (the open sets are $\emptyset$, $\mathbb N$ and all the sets of form $\ua n$, $n\in\mathbb N$). It is clear that the compact saturated sets are of the form $\ua n$, $n\in N$. Hence, the Alexandorff topology on $\mathbb N$ is co-sober, and  not $T_1$.
\end{example}
From Example \ref{exa}, we  have that ${\bf Co}$-${\bf Sob}\nsubseteq {\bf Top_1}$.
In \cite{defcosob}, it is asked whether every  sober space is co-sober.
 In \cite{wexu}, Wen and Xu gave a negative answer, by proving that the Isbell’s complete lattice  (see \cite{isbell}) equipped with the lower topology is sober but not co-sober. Hence,  ${\bf Sob}\nsubseteq{\bf Co}$-${\bf Sob}$.
 Then by Theorem \ref{th00}, we obtain the following result.
\begin{corollary}
	The category ${\bf Co}$-${\bf Sob}$ is not reflective in ${\bf Top_0}$.
\end{corollary}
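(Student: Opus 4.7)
The plan is to derive the non-reflectivity as a contradiction using the characterization in Corollary~\ref{th00}. First I would verify that the hypothesis ${\bf Co}$-${\bf Sob} \nsubseteq {\bf Top_1}$ of that corollary is met; this is immediate from Example~\ref{exa}, since the Alexandroff topology on $\mathbb N$ is co-sober yet carries a non-trivial specialization order, hence fails to be $T_1$. Condition (K2) is also automatic because ${\bf Co}$-${\bf Sob}$ is closed under the formation of homeomorphic objects (the defining property only refers to the lattice of compact saturated sets and the specialization order).

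With the hypothesis in hand, suppose for contradiction that ${\bf Co}$-${\bf Sob}$ were reflective in ${\bf Top_0}$. Then the implication $(1)\Rightarrow(2)$ of Corollary~\ref{th00} forces condition (K1), i.e.\ the inclusion ${\bf Sob} \subseteq {\bf Co}$-${\bf Sob}$. So the whole task reduces to exhibiting a single sober space that is not co-sober. This is exactly the content of the example of Wen and Xu \cite{wexu}: Isbell's complete lattice (see \cite{isbell}), endowed with its lower topology, is sober but not co-sober. This witness violates (K1), which contradicts our reflectivity assumption, so ${\bf Co}$-${\bf Sob}$ cannot be reflective in ${\bf Top_0}$.

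There is no genuine obstacle at the level of the argument itself, because the deep content has been outsourced: the direction ``reflective $\Rightarrow$ (K1)'' is packaged inside Theorem~\ref{th91}(3) (and thus inside Corollary~\ref{th00}), while the failure of (K1) for co-sobriety is supplied by the Wen--Xu counterexample. The only technical check one really has to perform on the spot is that ${\bf Co}$-${\bf Sob}$ actually sits inside the scope of Corollary~\ref{th00}, namely that it contains some $T_0$ non-$T_1$ space; Example~\ref{exa} takes care of that in one line.
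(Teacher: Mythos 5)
Your proposal is correct and follows exactly the paper's own argument: check ${\bf Co}$-${\bf Sob}\nsubseteq{\bf Top_1}$ via Example~\ref{exa}, then use the Wen--Xu example of a sober, non-co-sober space to violate (K1), contradicting the characterization in Corollary~\ref{th00}. No substantive difference from the paper's proof.
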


\subsection{Strong $d$-spaces}

The  strong $d$-spaces were  introduced by Xu and Zhao \cite{strongd}, which lie  between the classes of  $T_1$ spaces and that of $d$-spaces.

\begin{definition}[\cite{strongd}]
	A $T_0$ space $X$ is called a \emph{strong $d$-space} if for any $x\in X$, directed subset $D$ of $X$ and open subset $U$ of $X$,
	$\bigcap_{d\in D}\ua d\cap \ua x\subseteq U$ implies $\ua d_0\cap \ua x\subseteq U$ for some $d_0\in D$.
	
	The category of all strong $d$-spaces with continuous mappings is denoted by ${\bf SD}$. Then ${\bf SD}$ is a subcategory of ${\bf Top_0}$.
\end{definition}

It has been  shown that 	there exists a continuous dcpo $P$ whose Scott topology is not strong $d$-space \cite[Example 3.34]{strongd}, and the Scott topology on every continuous lattice is a strong $d$-space \cite[Remark 3.21]{strongd}. Hence, ${\bf Sob}\nsubseteq {\bf SD}$ and ${\bf SD}\nsubseteq{\bf Top_1}$. Then by Theorem \ref{th00}, we deduce the following result.

\begin{corollary}
	The category ${\bf SD}$  is not reflective in ${\bf Top_0}$.
\end{corollary}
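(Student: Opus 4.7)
The plan is to invoke Corollary \ref{th00}, whose hypotheses are in force for $\mathbf{SD}$: it is a full subcategory of $\mathbf{Top_0}$ closed under homeomorphism, and $\mathbf{SD}\nsubseteq \mathbf{Top_1}$ because, by \cite[Remark 3.21]{strongd}, the Scott topology on any nontrivial continuous lattice is a strong $d$-space while being evidently not $T_1$. Under these hypotheses the corollary asserts that reflectivity of $\mathbf{SD}$ in $\mathbf{Top_0}$ is equivalent to the conjunction of conditions (K1)--(K4), so it suffices to exhibit the failure of a single one of them.

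I would target (K1), the demand that $\mathbf{Sob}\subseteq \mathbf{SD}$. By \cite[Example 3.34]{strongd} there is a continuous dcpo $P$ whose Scott space $\Sigma P$ is not a strong $d$-space. Since the Scott topology on every continuous dcpo is sober (a classical result, see \cite{redbook,goubault}), $\Sigma P$ is a sober space that is not a strong $d$-space. This witnesses $\mathbf{Sob}\nsubseteq \mathbf{SD}$, which is precisely the failure of (K1).

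Once (K1) is shown to fail, the implication $(1)\Rightarrow(2)$ of Corollary \ref{th00} directly forces $\mathbf{SD}$ not to be reflective in $\mathbf{Top_0}$. There is essentially no obstacle: the two nontrivial inputs, namely a non-$T_1$ strong $d$-space and a sober space lying outside $\mathbf{SD}$, have both been supplied by prior work of Xu and Zhao, and everything else is a citation of Corollary \ref{th00}. If one preferred a route independent of (K1), the same pair of examples could instead be used to contradict $b$-closed-heredity (via Theorem \ref{th91}(3)) or to violate productivity after embedding into a power of the Sierpi\'{n}ski space, but the argument through (K1) is the shortest.
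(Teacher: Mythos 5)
Your argument is correct and coincides with the paper's: both establish $\mathbf{SD}\nsubseteq\mathbf{Top_1}$ via the Scott topology on a continuous lattice and refute (K1) via the continuous dcpo of \cite[Example 3.34]{strongd}, whose Scott space is sober but not a strong $d$-space, then conclude via the equivalence of reflectivity with (K1)--(K4). No further comment is needed.
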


\subsection{$k$-bounded sober spaces}
In \cite{sitopology}, Zhao and Ho introduced another weaker notion of sobriety, \emph{the k-bounded sobriety}.
\begin{definition}[\cite{sitopology}]
 A $T_0$ space $X$ is  \emph{$k$-bounded sober}  if for any irreducible closed subset $F$ of $X$ whose  $\bigvee F$ exists, there is a unique point $x\in X$ such that $F=\da x$.
\end{definition}
 The category of all $k$-bounded sober spaces with continuous mappings is denoted by ${\bf KSob}$. Then  ${\bf KSob}$ is a subcategory of ${\bf Top_0}$. Clearly, ${\bf Sob}\subseteq{\bf KSob}$, and since there exists a sober space $X$ (hence is co-sober) that is not $T_1$, we have that ${\bf KSob}$$\nsubseteq {\bf Top_1}$.

\begin{example}\label{exm00}
	Let $X=[0,3]$ with the Scott topology (i.e., the open sets are $\emptyset$, $[0,3]$ and all sets of the form $(x,3]$, $x\in[0,3]$). Since $[0,3]$ is a continuous lattice, $X$ is a sober space. For each integer $n\geq 2$, let $X_n=[0,1)\cup(2-\frac{1}{n}, 2+\frac{1}{n})$. We have the following facts.
\II
\I[(1)] Each $X_n$ is a $k$-bounded sober subspace of $X$.
	
	Let $F$ be an irreducible closed set in $X_n$ such that $\bigvee_{X_n}F=x$ exists. There are two cases: (c1) $x\in[0,1)$. Then $F\subseteq [0,1)$, and clearly we have that
	$\cl_{X_n}(F)=\cl_{X_n}(\{x\})$.  (c2) $x\in (2-\frac{1}{n}, 2+\frac{1}{n})$. Then  $F\cap (2-\frac{1}{n}, 2+\frac{1}{n})\neq\emptyset$, and thus $x=\bigvee_{X_n}F=\bigvee_{X_n} F\cap(2-\frac{1}{n}, 2+\frac{1}{n})=\bigvee_X F\cap(2-\frac{1}{n}, 2+\frac{1}{n})=\bigvee_X F$. Since $X$ is sober, we have $\cl_X(F)=\cl_X(\{x\})$, and thus $\cl_{X_n}(F)=\cl_X(F)\cap X_n=\cl_X(\{x\})\cap X_n=\cl_{X_n}(\{x\})$. All this shows that $X_n$ is $k$-bounded sober.
	
\I[(2)] The intersection $Y=\bigcap_{n\geq 2}X_n=[0,1)\cup\{2\}$ is not $k$-bounded sober.
	
	 Let $F=[0,1)$.  Then $\bigvee_{Y}F=2$ and $F$ is irreducible since it is a directed set.  Note that $[0,1]$ is closed in $X$ and $F=[0,1]\cap Y$, so $F$ is a closed set in $Y$. For each $x\in [0,1)$, we have $\cl_{Y}(\{x\})=[0,x]\neq F$, and $\cl_{Y}(\{2\})=Y\neq F$. Therefore, $Y$ is not a $k$-bounded sober space.
\III
\end{example}
	

From Example \ref{exm00}, the category ${\bf KSob}$ does not satisfies (K3), hence by Theorem \ref{k3}, we have the following result.

\begin{corollary}[see also in \cite{kbsober}]
	The category ${\bf KSob}$  is not reflective in ${\bf Top_0}$.
\end{corollary}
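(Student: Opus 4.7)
The plan is to reach a contradiction by invoking the necessary condition (K3) established in Theorem \ref{k3}(1). First, I would verify the hypothesis of Theorem \ref{k3} holds for $\mathbf{KSob}$: since every sober space is $k$-bounded sober (the definition of $k$-bounded sober weakens that of sober by restricting attention to irreducible closed sets with a join) and there exists a non-$T_1$ sober space (for instance the Sierpi\'nski space $\Sigma 2$), we have $\mathbf{Sob}\subseteq \mathbf{KSob}$ and $\mathbf{KSob}\nsubseteq \mathbf{Top_1}$.

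Next, assume toward a contradiction that $\mathbf{KSob}$ is reflective in $\mathbf{Top_0}$. Then by Theorem \ref{k3}(1), the category $\mathbf{KSob}$ must be closed under taking intersections of subspaces of a sober space: whenever $\{Z_i : i\in I\}\subseteq \mathbf{KSob}$ is a family of subspaces of a sober space $Z$, the subspace $\bigcap_{i\in I} Z_i$ must belong to $\mathbf{KSob}$.

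Now I would apply Example \ref{exm00} directly. The space $X=[0,3]$ with its Scott topology is a continuous lattice and therefore sober. The subspaces $X_n=[0,1)\cup(2-\tfrac{1}{n},2+\tfrac{1}{n})$ for $n\geq 2$ are each shown in part (1) of that example to be $k$-bounded sober, hence they form a family in $\mathbf{KSob}$ all sitting inside the sober space $X$. By the reflectivity assumption and Theorem \ref{k3}(1), the intersection $Y=\bigcap_{n\geq 2} X_n=[0,1)\cup\{2\}$ would have to belong to $\mathbf{KSob}$. But part (2) of Example \ref{exm00} exhibits the irreducible closed set $F=[0,1)$ in $Y$ with $\bigvee_Y F=2$ existing, while no point of $Y$ satisfies $\cl_Y(\{x\})=F$, so $Y$ is not $k$-bounded sober. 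This contradiction shows that $\mathbf{KSob}$ is not reflective in $\mathbf{Top_0}$.

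The entire argument is essentially a routine citation once the machinery of Section 2 is in place; no step should present a real obstacle. The only point requiring a little care is confirming that $\mathbf{KSob}\nsubseteq \mathbf{Top_1}$ (needed to invoke Theorem \ref{k3}), but this is automatic since $\mathbf{Sob}\subseteq \mathbf{KSob}$ and sober spaces are in general not $T_1$.
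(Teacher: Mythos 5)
Your proposal is correct and follows exactly the paper's route: the paper likewise observes that Example \ref{exm00} shows $\mathbf{KSob}$ fails the intersection-closure property of Theorem \ref{k3}(1), and concludes non-reflectivity. Your added verification that $\mathbf{KSob}\nsubseteq\mathbf{Top_1}$ (via $\mathbf{Sob}\subseteq\mathbf{KSob}$ and the Sierpi\'nski space) is the same check the paper makes just before the corollary.
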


\subsection{Open well-filtered spaces}

 The open well-filtered spaces were introduced by Shen, Xi, Xu and Zhao, which form a proper larger class than that of  the well-filterdness spaces \cite{shenowf}.
One  notable result on open  well-filtered spaces is that every core-compact open well-filtered space is sober.

\begin{definition}\cite{shenowf}
Let $X$ be a $T_0$ space.
\II	
\I[(1)]	 $\mathcal F\subseteq \mathcal O(X)$ is a $\ll$-filtered family if for any  $U_1,U_2\in\mathcal F$, there exists $U_3\in\mathcal F$ such that
	$U_3\ll U_1,U_2$ in the poset $(\mathcal O(X),\subseteq)$.
\I[(2)] $X$ is called \emph{open well-filtered} if for each $\ll$-filtered family $\mathcal F\subseteq \mathcal O(X)$  and $U\in\mathcal O(X)$, $$\bigcap\mathcal F\subseteq U\ \Rightarrow\ \ V\subseteq U\text{ for some }V\in\mathcal F.$$
\III
The category of all open well-filtered spaces with continuous mappings is denoted by {\bf OWF}.
Thus {\bf OWF} is a full subcategory of ${\bf Top_0}$.
\end{definition}

\begin{lemma}
	Let $X=\mathbb N$ be the set of positive integers with the cofinite topology (i.e., the open sets are $\emptyset$, $X$ and all sets of form $X\setminus F$, where $F$ is a finite subset of $X$). Then $X$ is not open well-filtered.
\end{lemma}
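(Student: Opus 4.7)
The plan is to exhibit a $\ll$-filtered family in $\mathcal O(X)$ together with an open set that witnesses failure of the open-well-filtered condition. My candidate family is the descending chain $V_n=X\setminus\{1,2,\ldots,n\}$ for $n\geq 1$, and my candidate open is $U=\emptyset$. Immediately, $\bigcap_{n\geq 1}V_n=\emptyset\subseteq U$, while every $V_n$ is cofinite and hence nonempty, so no member of $\mathcal F=\{V_n:n\geq 1\}$ is contained in $U$. The only substantive point is to verify that $\mathcal F$ is $\ll$-filtered.

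For this I would first characterise the way-below relation on $\mathcal O(X)$: for nonempty opens $X\setminus G$ and $X\setminus F$ with $F,G$ finite, $X\setminus G\ll X\setminus F$ if and only if $F\subseteq G$. The forward direction is obtained by testing $\ll$ against the directed family $\mathcal D=\{X\setminus H:F\subseteq H,\ H\text{ finite}\}$, whose union equals $X\setminus F$; any $W=X\setminus H\in\mathcal D$ containing $X\setminus G$ forces $F\subseteq H\subseteq G$. For the converse, given any directed $\mathcal D\subseteq\mathcal O(X)$ with $\bigcup\mathcal D\supseteq X\setminus F$, discard $\emptyset$ if present and view the remaining members as a down-directed family $\{F_\alpha\}$ of finite complements with $\bigcap_\alpha F_\alpha\subseteq F\subseteq G$. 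Pick any $F_0\in\{F_\alpha\}$; for each of the finitely many $a\in F_0\setminus G$ pick $F_{\alpha_a}$ omitting $a$; by down-directedness there is $F^*\in\{F_\alpha\}$ lying below $F_0$ and all the $F_{\alpha_a}$, giving $F^*\subseteq F_0\setminus(F_0\setminus G)\subseteq G$, so $X\setminus F^*\supseteq X\setminus G$.

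Consequently $\ll$ coincides with $\subseteq$ on the nonempty opens and in particular is reflexive there. Hence $V_n\ll V_n$, and for any $V_m,V_n\in\mathcal F$ the set $V_{\max(m,n)}$ is in $\mathcal F$ and satisfies $V_{\max(m,n)}\ll V_m$ and $V_{\max(m,n)}\ll V_n$, showing $\mathcal F$ is $\ll$-filtered. Combined with $\bigcap\mathcal F=\emptyset\subseteq\emptyset$ and the observation that no $V_n$ is empty, this contradicts the defining condition of open well-filteredness. I expect the main obstacle to be establishing the $\ll$-characterisation; one can alternatively verify $V_{n+1}\ll V_n$ directly from the definition by the same finite-pigeonhole argument applied only to the finite set $\{1,\ldots,n+1\}$, which already suffices to make the chain $\ll$-filtered.
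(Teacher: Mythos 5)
Your proof is correct and follows essentially the same route as the paper: both use the chain $V_n=X\setminus\{1,\dots,n\}$ as a $\ll$-filtered family with empty intersection, after establishing that $\ll$ coincides with $\subseteq$ on the nonempty opens of $X$. The only difference is cosmetic: the paper gets that characterisation in one line by observing that every subset of a cofinite space is compact (so every open is way-below any open containing it), whereas you verify it directly by the finite pigeonhole argument on complements.
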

\begin{proof}
	First, note that each subset of $X$ is compact. Then for any open subsets $U,V$ of $X$, $U\ll V$ iff $U\subseteq V$. Then the family
	$\{X\setminus\da n:n\in X\}$ is a $\ll$-filtered family of open sets, where $\da n=\{k:k\leq n\}$.
	We have $\bigcap_{n\in X}X\setminus\da n=\emptyset$ but $X\setminus\da n\nsubseteq\emptyset$ for any $n\in X$. Therefore, $X$ is not an open well-filtered space.
\end{proof}

\begin{example}
	Consider Johnstone's dcpo $L=\mathbb N\times(\mathbb N\cup\{\infty\})$, i.e., with the partial order by
	$(m,n)\leq (m',n')$ iff either $m=m'$ and $n\leq n'\leq\infty$ or $n'=\infty$ and $n\leq m'$,  shown in Figure 1:
	\begin{figure}[htbp]
		\centering
		\includegraphics[scale=.45]{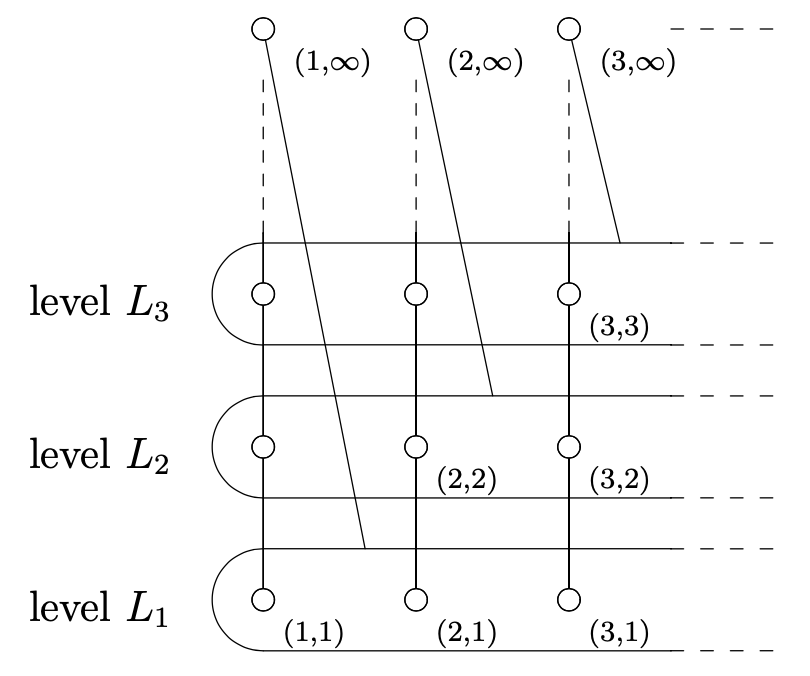}
		\caption{The Johnstone's dcpo}
	\end{figure}
	
	(1) $\forall U,V\in \sigma(L)$, $U\ll V$ iff $U=\emptyset$.
	
	Suppose $U\ll V$ and $U\neq\emptyset$.
	Since $U$ is Scott open, all but only finite  maximal elements are in $U$, i.e., $\{(n,\infty):n\in\mathbb N,(n,\infty)\notin U\}$ is finite. There exists $M\in\mathbb N$ such that
	$(n,\infty)\in U$ for all $n>M$. For each $n>M$, since $(n,\infty)=\bigvee_{m\in\mathbb N}(n,m)$, it follows that $\{(n,m):m\in\mathbb N\}\cap U\neq\emptyset$, and it must contain a smallest element in such a set, and let
	$(n,x_n)=\min\{(n,m):m\in\mathbb N\}\cap U$.
	Then the family
	$\{L\setminus\da \{(n,x_n):n\geq k\}: k\geq M\}$ is a directed open cover of $L$, but no element includes $U$, a contradiction. Hence, $U=\emptyset$.
	It is trivial that $\emptyset\ll V$ for all $V\in\sigma(L)$.
	
	From (1), we deduce the following.
	
	(2) $\Sigma L$ is open well-filtered.
	
	(3) For each $n\in\mathbb N$, define $K_n=L\setminus\bigcup_{1\leq k\leq n}L_n$.
	Similarly, we can prove that $\Sigma K_n$ is open well-filtered.
	
	(4) The intersection $\bigcap_{n\in\mathbb N} \Sigma K_n=\max L$ which is homeomorphic to  $\mathbb N$ equipped with the cofinite topology, hence is not an open well-filtered space.
\end{example}

From the above example, we have that the intersection of open-well-filtered spaces need not be open well-filtered. Therefore, by Theorem \ref{th00}, we have the following result.
\begin{corollary}
	The category {\bf OWF}  is not reflective in ${\bf Top_0}$.
\end{corollary}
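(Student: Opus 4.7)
The strategy is to invoke Corollary \ref{th00}, which (for subcategories ${\bf K}$ of ${\bf Top_0}$ with ${\bf K}\nsubseteq {\bf Top_1}$) makes reflectivity equivalent to the conjunction of (K1)--(K4). I plan to show that the hypothesis of the corollary holds for ${\bf OWF}$ and that (K3) fails, so ${\bf OWF}$ cannot be reflective.

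The first step is to verify ${\bf OWF}\nsubseteq{\bf Top_1}$. Every sober space is well-filtered, and since {\bf OWF} is known to properly contain the well-filtered spaces, every sober space is in ${\bf OWF}$. In particular, the Sierpi\'nski space $\Sigma 2$ lies in ${\bf OWF}\setminus {\bf Top_1}$, so the corollary is applicable.

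The main step is to negate (K3) using Johnstone's-dcpo construction from the example above. There I would read off that each $\Sigma K_n$ is open well-filtered, while the intersection $\bigcap_{n\in\mathbb N}\Sigma K_n = \max L$, carrying the cofinite topology, is not. To match the literal hypothesis of (K3), which demands the $\Sigma K_n$ be subspaces of a common sober space, I would embed $\Sigma L$ topologically into its sobrification $(\Sigma L)^s$. Each $\Sigma K_n$ then becomes a subspace of the sober space $(\Sigma L)^s$, and because the sobrification map is an injective homeomorphism onto its image, the intersection of these subspaces computed inside $(\Sigma L)^s$ agrees as a topological space with the one computed inside $\Sigma L$, namely $\max L$ with the cofinite topology. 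This intersection is not in ${\bf OWF}$, so (K3) fails, and by Corollary \ref{th00} the category ${\bf OWF}$ is not reflective in ${\bf Top_0}$.

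The only subtlety is the sobrification step: without it, the $\Sigma K_n$ are only exhibited as subspaces of the non-sober space $\Sigma L$, and (K3) would not literally apply. Since this step is a routine invocation of the universal property of sobrification together with the fact that sobrification embeddings preserve subspace structure, I do not anticipate a serious obstacle; the substantive content of the argument has already been packaged into the example and into Corollary \ref{th00}.
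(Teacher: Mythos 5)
Your proposal is correct and follows essentially the same route as the paper: exhibit the Johnstone-dcpo family $\{\Sigma K_n\}$ of open well-filtered spaces whose intersection $\max L$ carries the cofinite topology and is not open well-filtered, so that (K3) fails, and then apply Corollary \ref{th00}. If anything you are more careful than the paper, which leaves implicit both the verification that ${\bf OWF}\nsubseteq{\bf Top_1}$ and the step of passing to the sobrification of $\Sigma L$ so that the $\Sigma K_n$ literally sit inside a sober ambient space as (K3) requires.
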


\section{Conclusion}

In this paper we proved that if a reflective  subcategory of ${\bf Top}_0$  contains a non $T_1$ space and satisfies the condition (K2) in the criteria proposed by Lawson and Keimel, then it also satisfies the rest of the conditions (K1), (K3) and (K4). Using this result , we  deduced that several subcategories are not reflective, and thus give the negative answers to some open problems. We expect that this result might be further used to check the reflectivity of other subcategories of ${\bf Top}_0$.


\begin{thebibliography}{99}	
	
	\bibitem{dowker} C. H. Dowker, D. Papert, Quotient frames and subspaces, Proc. London Math. Soc.,  3 (16) (1966) 275--296.
		
	\bibitem{defcosob}	M. Escard\'{o}, J. Lawson, A. Simpson, Comparing Cartesian closed categories of (core) compactly generated spaces, Topol.
	Appl. 143 (2004) 105--145.
	
\bibitem{redbook} G. Gierz, K. Hofmann, K. Keimel, J. Lawson, M. Mislove, D. Scott, Continuous lattices and Domains,
Encyclopedia of Mathematics and Its Applications, Vol.93,
Cambridge University Press, 2003.

\bibitem{goubault} J. Goubault-Larrecq, Non-Hausdorff topology and Domain Theory, Cambridge University Press, 2013.

\bibitem{hof} R-E. Hoffmann, Topological functors admitting generalized Cauchy-completions, in Categorical Topology, Proc. of the conf. held at Mannheim 1975 ed. E. Binz and H. Herrlich, pp. 286--344, Lect. Notes in Math., { 540}, Berlin-Heidelberg-New York: Springer 1976.

\bibitem{isbell} J.R. Isbell, Completion of a construction of Johnstone, Proc. Amer. Math. Soc. 85 (1982) 333--334.

\bibitem{KeimelLawson} K. Keimel,  J.D. Lawson, $D$-completions and the $d$-topology, Ann. Pure Appl. Logic   159 (2009) 292--306.

\bibitem{maclane} S. Mac Lane, Categories for the Working mathematician, Springer, 1997.

\bibitem{nel} L.D. Nel, R.G.  Wilson,  Epireflections in the category of $T_0$-spaces, Fund. Math. 75 (1972)  69--74.

\bibitem{kelly} J.K. Kelly, General Toplogy, Springer-Verlag, 1955.

\bibitem{kbsober} J. Lu, K. Wang, G. Wu, B. Zhao, Nonexistence of $k$-bounded sobrification, Topol. Appl. (2020), submitted for publication.

\bibitem{skula} L. Skula, On a reflective subcategory of the category of all topological spaces, Trans. Amer. Math. Soc. 142 (1969) 37--41.

\bibitem{shenowf} C. Shen, X. Xi, X. Xu, D. Zhao, On open well-filtered spaces, Log. Methods Comput. Sci. 16 (4) (2020) 4--18.

\bibitem{shen1} C. Shen, X. Xi, D. Zhao, A note on the epireflection of $T_0$ spaces, submitted.

\bibitem{wexu}X.P. Wen and X.Q. Xu, Sober is not always co-sober, Topol. Appl. 250 (2018) 48--52.

\bibitem{wuxixuzhao}G. Wu, X. Xi, X. Xu and D. Zhao, Existence of well-filterification, Topol. Appl. 267 (2019) 107044.

\bibitem{Wyler} O. Wyler. Dedekind complete posets and Scott topologies, in: B. Banaschewski, R. E. Hoffman (Eds.), Continuous Lattices, Proc. Bremen, 1979, Lecture Notes in Mathematics, vol. 871, 1981, pp. 384--389.

\bibitem{openproblem}X. Xu, D. Zhao,
Some open problems on well-filtered spaces and sober spaces,
Topol.  Appl.
(2020) 107540.

\bibitem{strongd}X. Xu, D. Zhao, On topological Rudin’s lemma, well-filtered spaces and sober spaces, Topol. Appl. 272 (2020) 107080.

\bibitem{sitopology}D. Zhao, W. Ho, On topologies defined by irreducible sets, J. Log.  Algebr. Methods Program  84 (1) (2015) 185--195.
\end{thebibliography}
\end{document}